\headsep \linespread{1.2}
\newtheorem{thm}{Theorem}[section]
\newtheorem{lemma}[thm]{Lemma}
\newtheorem{proposition}[thm]{Proposition}
\theoremstyle{definition}
\newtheorem{remarkn}[thm]{Remark}
\numberwithin{equation}{section}
\newcommand{\R}{\mathbb{R}}
\newcommand{\N}{\mathbb{N}}
\newcommand{\biindice}[3]%
{
\renewcommand{\arraystretch}{0.5}
\begin{array}[t]{c}
#1\\
{\scriptstyle #2}\\
{\scriptstyle #3}\\
\end{array}
\renewcommand{\arraystretch}{1}
}
\def\biindice#1#2#3{#1_{\textstyle{#2\atop #3}}}
\newcommand{\ds}{\displaystyle}
\begin{document}
\title{\bf Existence and multiplicity results  for the zero mass Schr\"{o}dinger-Bopp-Podolsky system  with critical growth
\thanks{The first author is supported by the National Natural Science Foundation of China (12001198)
and the Natural Science Foundation of Jiangxi Province (20232BAB201009). The second author is
supported by the National Natural Science Foundation of China (12161038).
}}
\author{Wentao Huang
\thanks {Corresponding author. School   of  Science, East China JiaoTong University, Nanchang, 330013, China  (wthuang1014@aliyun.com).} \ \
 and Li Wang
\thanks {School   of  Science, East China JiaoTong University, Nanchang, 330013, China  (wangli.423@163.com).}
}

\date{}
\maketitle

\begin{abstract}
In this paper we study the  following zero mass  Schr\"{o}dinger-Bopp-Podolsky system with critical growth
\begin{equation*}
\renewcommand{\arraystretch}{1.25}
\begin{array}{ll}
\ds \left \{ \begin{array}{ll}
\ds-\Delta  u+q^2\phi u=\mu|u|^{p-2}u+|u|^{4}u,\quad &x\in \R^3, \\
-\Delta\phi+a^2\Delta^2\phi=4\pi u^2,\quad& x\in \R^3,\\
\end{array}
\right.
\end{array}
\end{equation*}
where $a>0$, $q\neq0$, $\mu>0$ is a parameter and $p\in(3,6)$.
By introducing a new functional framework developed by Caponio et al. \cite{Cd}, we first establish the existence of positive ground state solutions
for the case of $p\in(3,6)$. Moreover, for the case of  $p\in(4,6)$, multiplicity results are obtained by applying an abstract critical point theorem due to Perera \cite{Pe}.
\end{abstract}

{\bf Keywords:}  {Schr\"{o}dinger-Bopp-Podolsky system; zero mass problem; critical growth; multiplicity.}

 \medskip
{\bf  MSC 2010:} {35J50, 35J48, 35Q60.  }

\section{\label{Int}Introduction and main results}

Recently, there have been growing interests in the study of the following Schr\"{o}dinger-Bopp-Podolsky system
\begin{equation}\label{eqS1.1}
\renewcommand{\arraystretch}{1.25}
\begin{array}{ll}
\ds \left \{ \begin{array}{ll}
\ds-\Delta  u+\omega u+q^2\phi u=|u|^{p-2}u,\quad &x\in \R^3, \\
-\Delta\phi+a^2\Delta^2\phi=4\pi u^2,\quad& x\in \R^3,\\
\end{array}
\right.
\end{array}
\end{equation}
where $u,\ \phi:\R^3\rightarrow\R$, $\omega,\ a>0$ and $q\neq0$. In \cite{d}, system \eqref{eqS1.1} arises when  the authors coupled
a Schr\"{o}dinger field $\psi = \psi(t, x)$ with its electromagnetic field in the Bopp-Podolsky theory, specifically
 in the electrostatic case for the standing waves $\psi(t, x) = e^{i\omega t}u(x)$. In \eqref{eqS1.1}, $q\neq0$ denotes the coupling constant of the interaction between the particle
and its electromagnetic field. $\omega$ is the frequency of the standing wave $\psi(t, x) = e^{i\omega t}u(x)$.
The unknown $u$ represents the modulus of the wave function of the particle and $\phi$ is the
electrostatic potential.

The Bopp-Podolsky theory developed independently by Bopp \cite{Bo} and Podolsky \cite{Po} is a second-order
gauge theory for the electromagnetic field.
It was also introduced to address  the so-called infinity problem associated with
a point charge in the classical Maxwell theory, see \cite{B1,B2,B3,B4,Mi}. In fact,  in this Maxwell theory, by the well-known Gauss law
(or Poisson equation), for a given charge distribution with density $\rho$, the electrostatic potential $\phi$  satisfies
the equation
\begin{equation}\label{eqS1.2}
-\Delta \phi=\rho,\quad x\in \R^3.
\end{equation}
Let $\rho=4\pi \delta_{x_0}$ with $x_0\in\R^3$, then equation \eqref{eqS1.2} has a fundamental solution $\mathcal{G}(x-x_0)$ with
$$
\mathcal{G}(x)=\frac{1}{|x|},
$$
and  the energy of the corresponding electrostatic field is not finite since
$$
\mathcal{E}_\text{M}=\frac12\int_{\R^3}|\nabla\mathcal{G}|^2dx=+\infty.
$$
However,  in the Bopp-Podolsky theory, equation \eqref{eqS1.2} is replaced by
\begin{equation}\label{eqS1.3}
-\Delta \phi+a^2\Delta^2\phi=\rho,\quad x\in \R^3.
\end{equation}
Take $\rho=4\pi \delta_{x_0}$ again, the fundamental solution of equation \eqref{eqS1.3} is $\mathcal{K}(x-x_0)$ with
$$
\mathcal{K}(x)=\frac{1-e^{-\frac {|x|}a}}{|x|},
$$
and  its energy is  finite since
$$
\mathcal{E}_\text{BP}=\frac12\int_{\R^3}|\nabla\mathcal{K}|^2dx+\frac{a^2}2\int_{\R^3}|\Delta\mathcal{K}|^2dx<+\infty.
$$

In addition, the Bopp-Podolsky theory can be regarded as effective theory for short distances, and it is experimentally
indistinguishable from the Maxwell theory for large distance, see  \cite{Fr}. Here  $a > 0$
is the Bopp-Podolsky parameter that has the dimension of the inverse of mass and can be interpreted as a cut-off distance or can be linked to an effective
radius for the electron.

Motivated by the strong physical background, there have been growing interest around system \eqref{eqS1.1} in the last few years.
In \cite{d}, based on variational methods, d'Avenia and Siciliano first studied  system \eqref{eqS1.1} from a mathematical point of view.
Precisely, they proved that \eqref{eqS1.1} admits a nontrivial solution when $p \in (2, 6)$ and $|q|$ small enough or $p \in (3, 6)$ and $q\neq0$,
and also showed that, in the radial case,  the solutions tend to  solutions of the classical Schr\"{o}dinger-Poisson
system as $a\rightarrow0$. Meanwhile, they obtained that
\eqref{eqS1.1} does not admit any nontrivial solution for $p \ge 6$ via Pohozaev identity.
Subsequently, Siciliano and Silva \cite{SS} proved that for $p \in (2, 3]$,
\eqref{eqS1.1} has no solution for large $q$ but admits two radial solutions for  small $q$   via the fibering approach.
They also given qualitative properties about the energy level of the solutions and a variational characterization of these extremal values of $q$.
Using different variational techniques, \cite{CT,LPT} considered system \eqref{eqS1.1} in the critical case and obtained the existence of  ground state solutions.
Furthermore, we refer the readers
to the interesting results obtained in \cite{WCL,Z1,Z2} concerning sign-changing solutions, in \cite{dS,HW,LZ,LCF} for normalized solutions,
in \cite{DS,de} for semiclassical states and in \cite{dG,H1,H2}
 for nonlinear Schr\"{o}dinger-Bopp-Podolsky-Proca system on manifolds.

Note that all the papers mentioned above to system \eqref{eqS1.1} only deal with $\omega>0$. When $\omega=0$, system \eqref{eqS1.1} reduces to the following
static case
\begin{equation}\label{eqS1.4}
\renewcommand{\arraystretch}{1.25}
\begin{array}{ll}
\ds \left \{ \begin{array}{ll}
\ds-\Delta  u+q^2\phi u=|u|^{p-2}u,\quad &x\in \R^3, \\
-\Delta\phi+a^2\Delta^2\phi=4\pi u^2,\quad& x\in \R^3.\\
\end{array}
\right.
\end{array}
\end{equation}
Problem \eqref{eqS1.4} can be thought of as a zero mass problem  (see \cite{BL}), which arises in certain problems
related to the Yang-Mills equations.

Furthermore, let $a=0$, system \eqref{eqS1.4} turns into the zero mass Schr\"{o}dinger-Poisson
system
\begin{equation}\label{eqS1.5}
\renewcommand{\arraystretch}{1.25}
\begin{array}{ll}
\ds \left \{ \begin{array}{ll}
\ds-\Delta  u+q^2\phi u=|u|^{p-2}u,\quad &x\in \R^3, \\
-\Delta\phi=4\pi u^2,\quad& x\in \R^3.\\
\end{array}
\right.
\end{array}
\end{equation}
It is well known that the second equation in system \eqref{eqS1.5} is called the Poisson equation, which can be solved by
$$
\phi(x)=\frac{1}{|x|}*u^2.
$$
In such case, problem \eqref{eqS1.5} can be rewritten as a single equation in the following
form:
\begin{equation}\label{eqS1.6}
-\Delta u+q^2\left(\frac{1}{|x|}*u^2\right)u=|u|^{p-2}u,\quad x\in \R^3.
\end{equation}
The absence of a phase term makes the usual Sobolev space $H^1(\R^3)$ not to
be a good framework for studying the problem \eqref{eqS1.6}. It seems
quite clear that the right space should be (see \cite{R}):
$$
E(\R^3)=\left\{u\in D^{1,2}(\R^3)\ \Big|\ \int_{\R^3} \int_{\R^3} \frac{u^2(x)u^2(y)}{|x-y|}dxdy<+\infty\right\}.
$$
Based on this, for $p\in(\frac{18}{7},3)$, Ruiz \cite{R} showed that the corresponding functional has a minimizer at a negative level, which is a positive solution of \eqref{eqS1.6} in $E(\R^3)$. For $p\in(3,6)$,  Ianni and Ruiz \cite{IR} obtained the existence of a ground state solution by using monotonicity trick and a
compactness splitting lemma. Moreover,  they were concerned with the existence of infinitely many radial bound
state solutions via Krasnoselskii genus. Later, the higher-dimensional version of zero mass Schr\"{o}dinger-Poisson
system \eqref{eqS1.5} has also been investigated in \cite{MMV}. Recently, Liu et al. \cite{LZH} generalized the results in \cite{IR} to the critical case that
replacing $|u|^{p-2}u$ by $\mu|u|^{p-2}u+|u|^{4}u$.
Particularly, by combining a new perturbation approach and the well-known mountain pass theorem, when $p \in (3, 6)$,
they obtained the existence of positive ground state solutions.

Notably, inspired by \cite{R}, the authors in \cite{Cd}
introduced an appropriate working space and established some fundamental properties for this space including
embeddings into Lebesgue spaces. Moreover a general lower bound for the Bopp-Podolsky energy was
obtained. Based on these facts, by applying a perturbation argument, they finally proved the existence of a
weak solution to system \eqref{eqS1.4} for $p\in(3,6)$. Motivated by the cited papers \cite{Cd,LZH}, the main goal of this paper  is to investigate the zero
mass Schr\"{o}dinger-Bopp-Podolsky system with critical growth, namely
\begin{equation}\label{eqS1.7}
\renewcommand{\arraystretch}{1.25}
\begin{array}{ll}
\ds \left \{ \begin{array}{ll}
\ds-\Delta  u+q^2\phi u=\mu|u|^{p-2}u+|u|^{4}u,\quad &x\in \R^3, \\
-\Delta\phi+a^2\Delta^2\phi=4\pi u^2,\quad& x\in \R^3,\\
\end{array}
\right.
\end{array}
\end{equation}
where $\mu>0$ is a parameter and $p\in(3,6)$.

In this paper we mainly investigate two types of results. The first type is concerned with the existence of positive ground state solutions to system \eqref{eqS1.7}
for the case of $p\in(3,6)$.
Instead of applying some perturbation argument as in \cite{Cd,LZH}, we will employ a simpler method to address it.
Our first result read as

\begin{thm}\label{thm1.1}
If $p\in(4,6)$, system \eqref{eqS1.7} has a positive ground state solution for all $\mu>0$, while if $p\in(3,4]$, system \eqref{eqS1.7} still has a positive ground state solution for $\mu>0$ sufficiently large.
\end{thm}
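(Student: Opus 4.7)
The plan is to reduce system \eqref{eqS1.7} to a single equation by solving the Bopp--Podolsky equation as $\phi_u=\mathcal{K}*u^2$, work on the functional space $\mathcal{E}$ of Caponio et al.~\cite{Cd}, and apply the mountain pass theorem to
\[
I_\mu(u)=\tfrac12\int_{\R^3}|\nabla u|^2\,dx+\tfrac{q^2}{4}\int_{\R^3}\phi_u u^2\,dx-\tfrac{\mu}{p}\int_{\R^3}|u|^p\,dx-\tfrac16\int_{\R^3}|u|^6\,dx.
\]
The properties recorded in \cite{Cd}---nonnegativity of $\phi_u$, $C^1$ dependence on $u$, and the control of $\int\phi_u u^2$ by $\mathcal{E}$-norms---make $I_\mu$ of class $C^1$ on $\mathcal{E}$. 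The mountain pass geometry is then routine: for $u$ small in $\mathcal{E}$, the Sobolev embedding $D^{1,2}(\R^3)\hookrightarrow L^6(\R^3)$ combined with the nonlocal bounds gives $I_\mu(u)\geq\alpha>0$ on a small sphere, while for any fixed $u_0\not\equiv 0$ one has $I_\mu(tu_0)\to-\infty$ as $t\to\infty$ because the critical term eventually dominates.

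The crux is the Br\'ezis--Nirenberg-type estimate $c_\mu<\tfrac13 S^{3/2}$, where $c_\mu$ is the mountain pass value and $S$ the best constant in the embedding $D^{1,2}(\R^3)\hookrightarrow L^6(\R^3)$. Testing on a suitable cut-off Talenti bubble $U_\varepsilon$, the quantity $\max_{t\geq 0} I_\mu(tU_\varepsilon)$ expands as $\tfrac13 S^{3/2}+A_\varepsilon-B_\varepsilon$, with $A_\varepsilon>0$ coming from the Bopp--Podolsky term and $B_\varepsilon>0$ from the subcritical one; a direct computation gives $B_\varepsilon\sim\mu\,\varepsilon^{(6-p)/2}$, while $A_\varepsilon=O(\varepsilon^\beta)$ for some $\beta>0$ dictated by the explicit form of $\mathcal{K}$. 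For $p\in(4,6)$ the exponents satisfy $(6-p)/2<\beta$, so the negative correction beats the nonlocal one as $\varepsilon\to 0$ for every $\mu>0$; for $p\in(3,4]$ the two contributions are of comparable (or reversed) order, and one must take $\mu$ sufficiently large so that $B_\varepsilon-A_\varepsilon>0$ at an optimal $\varepsilon$. This sharp comparison of orders is the main technical obstacle and is precisely what forces the splitting in the statement.

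Granted $c_\mu<\tfrac13 S^{3/2}$, any Palais--Smale sequence $\{u_n\}$ at level $c_\mu$ is bounded: for $p\in(4,6)$ the combination $4I_\mu(u_n)-\langle I_\mu'(u_n),u_n\rangle$ controls $\|\nabla u_n\|_2$ thanks to the correct signs of the nonlocal and critical contributions, while for $p\in(3,4]$ the analogous combination with coefficient $p$ does the job in the large-$\mu$ regime. Extracting a weak limit $u\in\mathcal{E}$, a Br\'ezis--Lieb splitting applied to $v_n:=u_n-u$ shows that any nontrivial loss of critical mass would force the tail energy to exceed $\tfrac13 S^{3/2}$, contradicting $c_\mu<\tfrac13 S^{3/2}$; hence $u_n\to u$ strongly and $u$ is a nontrivial critical point with $I_\mu(u)=c_\mu$. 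Running the scheme on the truncated functional with $(u^+)^{p-1}$ and $(u^+)^5$ replacing $|u|^{p-2}u$ and $|u|^4u$ yields a nonnegative critical point, and the strong maximum principle applied to $-\Delta u+q^2\phi_u u=\mu(u^+)^{p-1}+(u^+)^5\geq 0$ upgrades this to $u>0$. Finally, the ground state property follows from the standard identification of $c_\mu$ with $\inf_{\mathcal{N}} I_\mu$ on the Nehari manifold via fibering, which is available for all $p\in(3,6)$.
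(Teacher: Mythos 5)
Your overall architecture (mountain pass in the space of Caponio et al., a Br\'ezis--Nirenberg estimate $c<\tfrac13S^{3/2}$ via Talenti bubbles, compactness recovery below that level, truncation plus maximum principle for positivity) agrees with the paper in outline, but there is a genuine gap at the step you dismiss most quickly: boundedness of the Palais--Smale sequence when $p\in(3,4]$. For this functional one has
\[
pJ(u)-\langle J'(u),u\rangle=\Bigl(\tfrac p2-1\Bigr)\int_{\R^3}|\nabla u|^2dx+\Bigl(\tfrac p4-1\Bigr)q^2\int_{\R^3}\phi_u u^2dx+\Bigl(1-\tfrac p6\Bigr)\int_{\R^3}|u|^6dx,
\]
and for $p<4$ the coefficient of the nonlocal term is negative, so this combination bounds nothing; taking $\mu$ large lowers the mountain pass level but does not change this sign structure, so ``the analogous combination with coefficient $p$ does the job in the large-$\mu$ regime'' is an assertion, not a proof. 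This is precisely the obstruction the paper singles out (failure of the Ambrosetti--Rabinowitz condition for $p\le4$), and it is resolved there not by a linear combination of $J$ and $\langle J',\cdot\rangle$ but by producing a \emph{special} $\text{(PS)}_{c}$ sequence with the extra property $G(u_n)\to0$, where $G=2\langle J'(u),u\rangle-P(u)$ and $P$ is the Pohozaev functional: one applies Willem's general minimax principle to $J\circ\Phi$ with $\Phi(\theta,v)=e^{2\theta}v(e^{\theta}x)$ (Proposition \ref{pro3.4}), after which $J(u_n)-\frac{1}{2p-3}G(u_n)$ has all coefficients nonnegative for every $p\in(3,6)$, in particular a positive multiple of $\int_{\R^3}\phi_{u_n}u_n^2dx$, and this is what yields boundedness in $E$. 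Without some such device your argument does not close for $p\in(3,4]$.

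Two smaller points. First, even for $p\in(4,6)$, the combination $4J-\langle J',\cdot\rangle$ annihilates the term $\int_{\R^3}\phi_u u^2dx$, so it only bounds $\|\nabla u_n\|_2$ and $\|u_n\|_6$; boundedness in $E$ also requires a bound on the Bopp--Podolsky energy, which you must recover either by keeping a positive coefficient on it (the paper uses $J-\tfrac1p\langle J',\cdot\rangle$, for which $\tfrac14-\tfrac1p>0$) or by a bootstrap through $\|u\|_p^p\le C\bigl(\int_{\R^3}\phi_u u^2dx\bigr)^{(6-p)/6}\|\nabla u\|_2^{(6-p)/3}\|u\|_6^{2p-6}$, which is sublinear in the nonlocal energy. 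Second, your identification of the ground state level via the classical Nehari manifold is delicate for $p\in(3,4)$: the fibering map $t\mapsto J(tu)$ mixes the powers $t^2$, $t^4$, $t^p$, $t^6$ and need not have a unique critical point when $p<4$. The paper instead works with the Nehari--Pohozaev manifold $\{G=0\}$ and the scaling $u_t(x)=t^2u(tx)$, for which uniqueness of the fiber maximum is proved (Lemma \ref{lm3.2}), yielding $J(u)\ge c$ for every nontrivial critical point and hence the attainment of $m=\inf\{J(u):u\ne0,\ J'(u)=0\}$.
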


The proof of Theorem \ref{thm1.1} is based on variational method. The main difficulties in proving Theorem \ref{thm1.1} lie in two aspects. Firstly,
it is not yet known if the (PS) sequences are bounded or not, since the corresponding (AR) condition does not hold for $p\in(3,4]$.
In a prior work \cite{LZH}, the authors dealt with
this issue by adding a nonlocal  perturbation term  into the corresponding functional. We also note that this approach is helpful to recover the boundedness of the (PS) sequence. However, different from \cite{LZH}, we shall analyze the natural functional  directly in the current paper,
therefore is somewhat simpler. Precisely,
motivated by \cite{HIT,J},
we first study the behavior of $J(e^{2\theta}v(e^{\theta}x))$ for $\theta\in\R$,
then we construct a $\text{(PS)}_{c}$ sequence $\{u_n\}$ with an extra asymptotically property  (see Proposition \ref{pro3.4} below)
\begin{equation}\label{eqS1.8}
\renewcommand{\arraystretch}{1.25}
\ds G(u_n)\rightarrow0\quad\text{as}\ n\rightarrow\infty,
\end{equation}
where $c$ is the mountain pass level of the functional $J$ and  $G(u)=2\big\langle J'(u),u\big\rangle-P(u)$
(see \eqref{eqS3.2} below).
From \eqref{eqS1.8}, we can obtain the boundedness of the $\text{(PS)}_{c}$  sequence easily.
Secondly, the unboundedness of the domain $\R^3$ and the nonlinearity
with  critical growth lead to the lack of compactness.
This problem can be solved by giving an important upper bounded estimate on the  mountain pass level $c$ together with
compactness lemma due to Lions \cite{Lions}.
Proceeding by the standard variational method, the existence of ground state solutions to problem  \eqref{eqS1.7} follows.

The second type of results deals with the multiplicity result of system \eqref{eqS1.7}
for the case of $p\in(4,6)$. To the best of our knowledge, there is no result in the literature on the existence of multiple solutions to
Schr\"{o}dinger-Bopp-Podolsky system in the zero mass case. Thanks to an abstract critical point theorem recently obtained by Theorem 2.1 in \cite{Pe},
we will prove our second result as follows.

\begin{thm}\label{thm1.2}
Let $p\in(4,6)$. For any $m\in\N$, there exists $\mu_m>0$ such that system \eqref{eqS1.7} has $m$ pairs of solutions with positive energy for all $\mu\ge\mu_m$.
\end{thm}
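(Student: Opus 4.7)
The plan is to apply Perera's abstract critical point theorem (Theorem~2.1 in \cite{Pe}) to the natural reduced energy functional associated with \eqref{eqS1.7}. As in the proof of Theorem~\ref{thm1.1}, the Bopp-Podolsky equation is solved by $\phi=\phi_u$ via convolution with the kernel $\mathcal{K}$ in the working space $E$ of \cite{Cd}, producing the even $C^1$-functional
\begin{equation*}
J_\mu(u)=\frac{1}{2}\int_{\R^3}|\nabla u|^2\,dx+\frac{q^2}{4}\int_{\R^3}\phi_u u^2\,dx-\frac{\mu}{p}\int_{\R^3}|u|^p\,dx-\frac{1}{6}\int_{\R^3}|u|^6\,dx,
\end{equation*}
with $J_\mu(0)=0$ and a small-sphere of positivity already furnished by the proof of Theorem~\ref{thm1.1}.

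Next, I would verify a local Palais-Smale condition: $J_\mu$ satisfies $(PS)_c$ for every $c\in(0,c^*)$, where $c^*:=\tfrac{1}{3}S^{3/2}$ and $S$ is the best constant in $D^{1,2}(\R^3)\hookrightarrow L^6(\R^3)$. Since $p>4$, the Ambrosetti-Rabinowitz identity
\begin{equation*}
4J_\mu(u)-\langle J_\mu'(u),u\rangle=2\|\nabla u\|_2^2+\tfrac{\mu(p-4)}{p}\|u\|_p^p+\tfrac{1}{3}\|u\|_6^6
\end{equation*}
yields boundedness of any $(PS)_c$ sequence in $D^{1,2}(\R^3)$; the embedding properties of $E$ proved in \cite{Cd} then upgrade this to boundedness in $E$. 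A Brezis--Lieb decomposition combined with the Lions compactness lemma used in proving Theorem~\ref{thm1.1} rules out escape of mass to infinity, while any non-trivial concentration profile would carry energy at least $c^*$, contradicting $c<c^*$.

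The central step is the $m$-dimensional min-max estimate. Fix $m\in\N$ and choose an $m$-dimensional subspace $W_m\subset C_c^\infty(\R^3)$. By equivalence of norms on $W_m$, there exist constants $\kappa_m,M_m>0$ with $\|v\|_p\ge\kappa_m\|\nabla v\|_2$ and $\int_{\R^3}\phi_v v^2\le M_m\|\nabla v\|_2^4$ for all $v\in W_m$ (the second estimate uses the Bopp-Podolsky analogue of the Hardy-Littlewood-Sobolev inequality from \cite{Cd}). Writing any $u\in W_m$ as $u=tv$ with $\|\nabla v\|_2=1$ and $t\ge 0$, discarding the negative critical term, the hypothesis $p>4$ forces the scalar function $\tfrac{t^2}{2}+\tfrac{q^2M_m}{4}t^4-\tfrac{\mu\kappa_m^p}{p}t^p$ to attain its maximum at some $t_\ast\sim \mu^{-1/(p-2)}$ with value bounded by $K_m\mu^{-2/(p-2)}\to 0$ as $\mu\to\infty$. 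Choose $\mu_m>0$ so that $K_m\mu_m^{-2/(p-2)}<c^*$; then $\sup_{W_m}J_\mu<c^*$ for every $\mu\ge\mu_m$. Perera's theorem, combined with the positivity of $J_\mu$ near the origin and the $(PS)_c$ condition on $(0,c^*)$, produces at least $m$ distinct pairs of critical points of $J_\mu$ with critical values in $(0,c^*)$, giving the desired $m$ pairs of positive-energy solutions of \eqref{eqS1.7}.

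The main obstacle is the rigorous handling of the nonlocal term throughout the argument: unlike the Schr\"odinger-Poisson setting of \cite{IR,LZH}, the convolution with $\mathcal{K}(x)=(1-e^{-|x|/a})/|x|$ defines a quartic form on the space $E$ which is only controlled through the embeddings of \cite{Cd}, and one must track how its contribution balances the super-quartic terms $-\tfrac{\mu}{p}\|u\|_p^p$ and $-\tfrac{1}{6}\|u\|_6^6$ uniformly for $u$ in the unit sphere of $W_m$. A secondary subtlety is the verification of the strong convergence $\phi_{u_n}\to\phi_u$ in the appropriate topology during the PS analysis, which again relies on the non-standard space-theoretic framework introduced in \cite{Cd} rather than the familiar $H^1$ tools of \cite{CT,LPT}.
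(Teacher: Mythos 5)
Your overall architecture matches the paper's: apply Perera's theorem (Proposition \ref{pro4.1}) with $c^*=\tfrac13 S^{3/2}$, verify a local $\textrm{(PS)}_c$ condition below $c^*$, and push the energy on an $m$-dimensional symmetric set below $c^*$ by taking $\mu$ large. Your geometric estimate is a legitimate variant of the paper's: you discard the critical term and let the $\mu$-weighted $t^p$ term (with $p>4$) dominate the quartic nonlocal term, obtaining the quantitative bound $\max J_\mu\lesssim\mu^{-2/(p-2)}$, whereas the paper keeps the $-C_2\|u\|^6$ term to get negativity on a $\mu$-independent sphere $\partial\mathcal{B}_R$ and uses large $\mu$ only to depress $f(s)-\mu C_1 s^p$ on the compact annulus $[\delta,R]$. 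Either route works; yours gives a cleaner rate, the paper's makes the set $A$ and the verification of both inequalities in \eqref{eqS4.1} completely explicit.

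There are, however, two genuine gaps. First, you never restrict to the radial subspace, and in the full translation-invariant space $E$ the $\textrm{(PS)}_c$ condition you claim is simply false at any positive critical level: if $u$ is a nontrivial critical point with $J(u)=c_0\in(0,c^*)$, then $u(\cdot-y_n)$ with $|y_n|\to\infty$ is a $\textrm{(PS)}_{c_0}$ sequence with no convergent subsequence. "Ruling out escape of mass to infinity" via the Lions lemma works for producing one mountain-pass solution (where one may translate the sequence, as in the proof of Theorem \ref{thm1.1}), but not for an index-theoretic multiplicity theorem that requires the actual compactness of Palais--Smale sequences. The paper works in $E_r$, where Lemma \ref{lm2.2} gives compact embedding into $L^q(\R^3)$ for $q\in(\tfrac{18}{7},6)$, so the subcritical terms converge strongly and only the critical bubble must be excluded via $c<\tfrac13S^{3/2}$. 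Second, your boundedness argument is defective as written: the combination $4J_\mu(u)-\langle J_\mu'(u),u\rangle$ cancels the term $\int_{\R^3}\phi_u u^2\,dx$ entirely (and the gradient coefficient is $1$, not $2$), so it controls only $\|\nabla u_n\|_2$ and $\|u_n\|_p$, while boundedness in $D^{1,2}(\R^3)$ does \emph{not} imply boundedness in $E$ --- the embeddings of \cite{Cd} go in the opposite direction, and the $E$-norm contains the Bopp--Podolsky energy as an independent quantity. One can repair this by feeding $\langle J_\mu'(u_n),u_n\rangle=o_n(1)\|u_n\|$ back in to bound $q^2\int\phi_{u_n}u_n^2\,dx$, but the cleaner fix is the paper's choice $J(u_n)-\tfrac1p\langle J'(u_n),u_n\rangle$, which retains the coefficient $\bigl(\tfrac14-\tfrac1p\bigr)q^2>0$ in front of the nonlocal term precisely because $p>4$, and hence bounds the full $E$-norm in one step.
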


To get multiplicity results like this theorem, a crucial step is to prove the (PS) condition.
Since system \eqref{eqS1.7} involves critical growth, the most we can expect is the local (PS) condition, that is the  $\text{(PS)}_{c}$ condition
for $c\in(0,c^*)$ being $c^*$ some positive constant.

The outline of this paper is as follows. In Section 2,  some preliminary results and lemmas are presented.
In Section 3, we  show the  existence of positive ground state solutions to system \eqref{eqS1.7}
for the case of $p\in(3,6)$. Section 4 is devoted to the multiplicity result of system \eqref{eqS1.7}
for the case of $p\in(4,6)$.

Finally, we present some notations used in this paper.

\noindent
$\bullet$ \ $L^q(\R^3)$ is the usual Lebesgue space with the standard norm
$$
\|u\|_q:=\Big(\int_{\R^3}|u|^qdx\Big)^\frac1q,\ 1\leq q<\infty.
$$
$\bullet$ \ $X^{-1}$ denote the dual space of $X$.\\
$\bullet$ \ For any $x\in\R^3$ and $R>0$, $B_R(x):=\{y\in\R^3\ \big|\ |y-x|<R\}$.\\
$\bullet$ \ $o_n(1)$ means a vanishing sequence as $n\rightarrow\infty$.\\
$\bullet$ \ $C$ will be used to denote various positive constants which may change from line to line.

\section{Notations and preliminaries}

In this section, we provide some notations and preliminary lemmas. Let
\[\mathcal{A}: = \Big\{ \phi \in D^{1,2}({\R^3})\ \big|\ \Delta\phi \in {L^2}({\R^3})\Big\} \]
equipped with the scalar product
$$
(\phi,\varphi)_{\mathcal{A}}: = \int_{{\R^3}} \nabla\phi\nabla\varphi dx  + a^2\int_{{\R^3}} \Delta\phi\Delta\varphi dx.
$$
We denote by $\|\cdot\|_{\mathcal{A}}$ the associated norm. From \cite{d}, we know that $C_0^\infty(\R^3)$ is dense in the Hilbert space $\mathcal{A}$ and
$$
\mathcal{A}\hookrightarrow L^q(\R^3)\quad \text{for}\ q\in[6,+\infty].
$$

In order to study the zero mass problem \eqref{eqS1.7}, we need first to consider an appropriate working space instead of $H^1(\R^3)$. Inspired by
\cite{Cd}, we introduce the space $E$ of the functions in $D^{1,2}({\R^3})$ with finite Bopp-Podolsky energy, i.e.,
$$
E:=\left\{u\in D^{1,2}(\R^3)\ \Big|\ \int_{\R^3} \int_{\R^3} \frac{1-e^{-\frac{|x-y|}{a}}}{|x-y|}u^2(x)u^2(y)dxdy<+\infty\right\}
$$
equipped with the norm
$$
\|u\|:=\left[\int_{\R^3}|\nabla u|^2dx+\Bigg(\int_{\R^3} \int_{\R^3} \frac{1-e^{-\frac{|x-y|}{a}}}{|x-y|}u^2(x)u^2(y)dxdy\Bigg)^\frac12\right]^{\frac12}.
$$
It has been proved in \cite{Cd} that
$$
E=\Big\{ u \in D^{1,2}({\R^3})\ \big|\ \phi_u \in \mathcal{A}\Big\},
$$
where
$$
\phi_u(x):=(\mathcal{K}*u^2)(x)= \int_{\R^3} \frac{1-e^{-\frac{|x-y|}{a}}}{|x-y|}u^2(y)dy.
$$
Moreover, for each $u\in E$, $\phi_u$ is the unique weak solution in $\mathcal{A}$ of
$$
-\Delta\phi+a^2\Delta^2\phi=4\pi u^2,\quad x\in \R^3,
$$
and
\begin{equation}\label{eqS2.1}
\ds \|\phi_u\|_\mathcal{A}^2=\int_{\R^3}\left(|\nabla \phi_u|^2+a^2|\Delta\phi_u|^2\right)dx=4\pi\int_{\R^3}\phi_uu^2dx=4\pi\int_{\R^3} \int_{\R^3} \frac{1-e^{-\frac{|x-y|}{a}}}{|x-y|}u^2(x)u^2(y)dxdy.
\end{equation}

In addition, the functional space $E$ has the following useful embedding properties.
\begin{lemma}\label{lm2.1}(\cite{Cd})
The space $(E, \|\cdot\|)$ is a uniformly convex Banach space. Moreover, the space $C_0^\infty(\R^3)$ is dense in $(E, \|\cdot\|)$. The space $E$
is continuously embedded into $L^q(\R^3)$ for $q\in[3,6]$.
\end{lemma}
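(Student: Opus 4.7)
The statement bundles three assertions---Banach structure and uniform convexity of $(E,\|\cdot\|)$, density of $C_0^\infty(\R^3)$, and the continuous embeddings into $L^q(\R^3)$ for $q\in[3,6]$---and the plan is to treat them in order. Two structural features of the Bopp-Podolsky kernel $\mathcal{K}(x)=(1-e^{-|x|/a})/|x|$ will carry most of the weight: its positive Fourier symbol $\widehat{\mathcal{K}}(\xi)=4\pi/(|\xi|^2(1+a^2|\xi|^2))$, which makes the bilinear form
\[
B(f,g):=\iint_{\R^3\times\R^3}\mathcal{K}(x-y)f(x)g(y)\,dx\,dy
\]
positive definite, together with the asymptotics $\mathcal{K}(x)\sim 1/|x|$ as $|x|\to\infty$.

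For the first assertion I would view $\|u\|^2$ as the $\ell^2$-sum $N_1(u)^2+N_2(u)^2$ with $N_1(u):=\|\nabla u\|_2$ and $N_2(u):=B(u^2,u^2)^{1/4}$. Both are positively homogeneous; the triangle inequality for $N_2$ can be obtained by combining the pointwise bound $(u+v)^2\leq(|u|+|v|)^2$, Minkowski for the inner product $\sqrt{B(\cdot,\cdot)}$, and the Cauchy--Schwarz bound $B(|uv|,|uv|)\leq B(u^2,u^2)^{1/2}B(v^2,v^2)^{1/2}$, which chain together to give
\[
B((u+v)^2,(u+v)^2)^{1/4}\leq B(u^2,u^2)^{1/4}+B(v^2,v^2)^{1/4}.
\]
Triangle for $\|\cdot\|$ itself then follows from the elementary $\ell^2$-Minkowski inequality applied to the pair $(N_1,N_2)$. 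Completeness comes from extracting an almost-everywhere convergent subsequence via completeness of $D^{1,2}(\R^3)$ and applying Fatou's lemma to the quadruple integral. Uniform convexity of $\|\cdot\|$ reduces, by stability of uniform convexity under $\ell^2$-sums, to uniform convexity of $N_1$ and $N_2$ separately: $N_1$ is a Hilbert seminorm, and for $N_2$ I would use identity \eqref{eqS2.1} to rewrite $N_2(u)^2=\|\phi_u\|_\mathcal{A}/\sqrt{4\pi}$ and then transplant an $L^4$-type Clarkson inequality through the nonlinear map $u\mapsto u^2$ and the Hilbert structure of $\mathcal{A}$.

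Density of $C_0^\infty(\R^3)$ follows from the usual two-step truncate-and-mollify argument: approximate $u\in E$ first by $\chi_R u$ for a smooth cutoff $\chi_R$ supported in $B_{R+1}(0)$ (dominated convergence handles both the Dirichlet term and the quadruple integral, with positivity of $\mathcal{K}$ furnishing a dominant), then convolve with a mollifier, Young's inequality closing the nonlocal piece. The embedding $E\hookrightarrow L^6(\R^3)$ is free from $E\subset D^{1,2}(\R^3)$ and the critical Sobolev embedding, and the intermediate cases $q\in(3,6)$ follow by standard H\"older interpolation once $L^3$ is established. The case $q=3$ is the technical crux: splitting $\mathcal{K}(x)=1/|x|-e^{-|x|/a}/|x|$, applying Hardy--Littlewood--Sobolev to the leading Coulomb piece, and estimating the short-range correction by Young's inequality together with the $L^6$ bound should yield $\|u\|_{12/5}\leq C\|u\|$, after which the interpolation $\|u\|_3\leq\|u\|_{12/5}^{2/3}\|u\|_6^{1/3}$ closes the argument. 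The hardest points, I expect, will be the uniform convexity of $N_2$ (since $u\mapsto u^2$ is nonlinear, standard Clarkson inequalities do not transfer directly) and the $L^3$ embedding (where one must exploit both the short-range boundedness and the long-range Coulomb tail of $\mathcal{K}$ in tandem, rather than leaning on a single scale-invariant kernel).
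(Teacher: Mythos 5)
This lemma is not proved in the paper at all --- it is imported verbatim from \cite{Cd} --- so the only question is whether your reconstruction would actually work. Most of it would. The triangle inequality for $N_2(u)=B(u^2,u^2)^{1/4}$ via positivity and positive-definiteness of $\mathcal{K}$, Minkowski for $\sqrt{B(\cdot,\cdot)}$ and the bound $B(|uv|,|uv|)\le B(u^2,u^2)^{1/2}B(v^2,v^2)^{1/2}$ is exactly the classical chain going back to Ruiz \cite{R}; completeness via Fatou is fine; the reduction of uniform convexity to that of $N_1$ and $N_2$ through the isometric embedding $u\mapsto(u,u)$ into the $\ell^2$-direct sum is legitimate; and the Clarkson-type inequality you anticipate for $N_2$ does exist: writing $(u+v)^2=2(u^2+v^2)-(u-v)^2$, using that $\|\cdot\|_H:=B(\cdot,\cdot)^{1/2}$ is monotone on nonnegative functions and that $(u-v)^2\le 2(u^2+v^2)$ pointwise forces $B(u^2+v^2,(u-v)^2)\ge\tfrac12 B((u-v)^2,(u-v)^2)$, one gets $16\,N_2(\tfrac{u+v}{2})^4+N_2(u-v)^4\le 4\bigl(N_2(u)^2+N_2(v)^2\bigr)^2$, which is uniform convexity of $N_2$. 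The truncate-and-mollify density argument and the $L^6$ and interpolated $L^q$, $q\in(3,6)$, embeddings are also unproblematic.

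The genuine gap is the $L^3$ embedding, precisely the step you flag as the crux. Hardy--Littlewood--Sobolev (and likewise Young for the short-range piece) runs in the wrong direction for your purpose: it yields $B(u^2,u^2)\le C\|u^2\|_{6/5}^2=C\|u\|_{12/5}^4$, i.e.\ an upper bound on the Bopp--Podolsky energy \emph{in terms of} $\|u\|_{12/5}$, and no manipulation of kernel estimates of this type can produce the reverse inequality $\|u\|_{12/5}\le C\|u\|$. Moreover that inequality is false: by Lemma \ref{lm2.2} even the radial subspace $E_r$ embeds into $L^q(\R^3)$ only for $q>\frac{18}{7}>\frac{12}{5}$, so $E\not\hookrightarrow L^{12/5}(\R^3)$ and your interpolation $\|u\|_3\le\|u\|_{12/5}^{2/3}\|u\|_6^{1/3}$ has nothing to stand on. The correct mechanism is the interpolation inequality of Lemma \ref{lm2.5}, $\|u\|_3^3\le\frac{1}{\pi}\|\phi_u\|_{\mathcal{A}}\|\nabla u\|_2$, which is obtained not by estimating the kernel but by testing the equation $-\Delta\phi_u+a^2\Delta^2\phi_u=4\pi u^2$ against $|u|$ and integrating by parts, so that the Dirichlet and nonlocal energies enter multiplicatively; combined with \eqref{eqS2.1} it gives $\|u\|_3^3\le C\,N_2(u)^2N_1(u)\le C\|u\|^3$, after which your interpolation with $L^6$ covers $q\in(3,6)$. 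Replace the HLS/$L^{12/5}$ step by this argument and the proof closes.
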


Moreover, denoting by $E_r$ the subspace of radial functions in $E$, we can say something more on the embedding properties.
\begin{lemma}\label{lm2.2}(\cite{Cd})
The space $E_r$
is continuously embedded into $L^q(\R^3)$ for $q\in(\frac{18}{7},6]$. The embedding is compact for $q\in(\frac{18}{7},6)$.
\end{lemma}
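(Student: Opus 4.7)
The plan is to combine the continuous embedding $E \hookrightarrow L^q(\R^3)$ for $q \in [3,6]$ from Lemma \ref{lm2.1} with the Strauss radial decay $|u(x)| \leq C |x|^{-1/2} \|u\|_{D^{1,2}}$ available to every $u \in D^{1,2}_{\mathrm{rad}}(\R^3)$, and with an extra low-exponent $L^r$-type control on $u$ extracted from the Bopp-Podolsky nonlocal term through identity \eqref{eqS2.1}.

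For the continuous embedding $E_r \hookrightarrow L^q(\R^3)$ with $q \in (\tfrac{18}{7}, 6]$, the range $[3,6]$ is already covered by Lemma \ref{lm2.1}, so only $q \in (\tfrac{18}{7}, 3)$ needs new work. Splitting $\|u\|_q^q = \|u\|_{L^q(B_1)}^q + \|u\|_{L^q(B_1^c)}^q$, the interior is bounded by H\"older and Lemma \ref{lm2.1}, namely $\|u\|_{L^q(B_1)} \leq |B_1|^{1/q-1/6} \|u\|_6 \leq C \|u\|_E$. For the tail, the Strauss bound gives $|u(x)|^{q-s} \leq C |x|^{-(q-s)/2}$ for $s \in (0,q)$, hence $\int_{B_1^c} |u|^q \, dx \leq C \int_{B_1^c} |x|^{-(q-s)/2} |u|^s \, dx$, which is finite as long as one has an $L^s$-bound on $u$ with $s<q<3$. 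Such a low-exponent bound is supplied by the nonlocal term: combining identity \eqref{eqS2.1} with the embedding $\mathcal{A} \hookrightarrow L^r$ for $r \in [6,+\infty]$ and HLS applied to the $1/|x|$-tail of $\mathcal{K}$ produces an estimate of the form $\|u\|_{L^{12/5}}^2 \leq C(\|u\|_{D^{1,2}} + N(u)^{1/2}) \lesssim \|u\|_E^2$. Balancing the radial weight exponent $(q-s)/2$ against this $L^{12/5}$-control pinpoints $\tfrac{18}{7}$ as the sharp borderline below which the tail integral ceases to close.

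For the compactness when $q \in (\tfrac{18}{7}, 6)$, let $u_n \rightharpoonup 0$ in $E_r$; I want to show $u_n \to 0$ in $L^q(\R^3)$. Pick $q_- \in (\tfrac{18}{7}, q)$ and $q_+ \in (q, 6)$ together with $\theta \in (0,1)$ such that $\tfrac{1}{q} = \tfrac{\theta}{q_-} + \tfrac{1-\theta}{q_+}$, and split $\int |u_n|^q = \int_{B_R} |u_n|^q + \int_{B_R^c} |u_n|^q$. On $B_R$, the Rellich--Kondrachov theorem gives the compact embedding of $D^{1,2}(B_R)$ into $L^q(B_R)$ for $q<6$, so $\int_{B_R} |u_n|^q \to 0$. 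On $B_R^c$, interpolation and the Strauss bound yield
\[
\|u_n\|_{L^q(B_R^c)} \leq \|u_n\|_{L^{q_-}(B_R^c)}^\theta \|u_n\|_{L^{q_+}(B_R^c)}^{1-\theta},
\]
while $\|u_n\|_{L^{q_+}(B_R^c)}^{q_+} \leq (CR^{-1/2})^{q_+ - q_-} \|u_n\|_{L^{q_-}}^{q_-} \to 0$ as $R \to \infty$ uniformly in $n$, since $\|u_n\|_{L^{q_-}}$ is uniformly bounded by Part 1. Letting first $R \to \infty$ and then $n \to \infty$ closes the argument.

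The main obstacle, and the step that does the real work, is extracting the low-exponent $L^r$-control on $u$ from the nonlocal term in Part 1. Because $\mathcal{K}(x) = (1 - e^{-|x|/a})/|x|$ is bounded near the origin, the direct HLS route used by Ruiz \cite{R} on the pure Coulomb kernel does not carry over cleanly; one must isolate the $1/|x|$-tail of $\mathcal{K}$ and exploit \eqref{eqS2.1} together with the embedding $\mathcal{A} \hookrightarrow L^r$, $r \in [6,+\infty]$, to channel the nonlocal energy into a Lebesgue bound on $u$, after which the sharp threshold $\tfrac{18}{7}$ emerges from the balance between the radial decay rate $1/2$ and the weight exponent in the tail estimate.
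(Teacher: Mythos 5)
First, a point of comparison: the paper does not prove this lemma at all --- it is quoted from \cite{Cd} --- so your attempt has to be measured against the argument of Caponio et al., which follows Ruiz \cite{R} (see also \cite{MMV}). Your compactness half is structurally fine (local Rellich--Kondrachov plus a uniform tail estimate obtained from the Strauss decay and a uniform $L^{q_-}$ bound with $q_-\in(\tfrac{18}{7},q)$), but it rests entirely on Part~1, and Part~1 contains a genuine gap. The step that fails is the claimed bound $\|u\|_{12/5}^2\le C\bigl(\|u\|_{D^{1,2}}+N(u)^{1/2}\bigr)$. The Hardy--Littlewood--Sobolev inequality runs in the \emph{opposite} direction: it gives $N(u)\le C\|u^2\|_{6/5}^2=C\|u\|_{12/5}^4$, i.e.\ the $L^{12/5}$ norm controls the nonlocal energy, and no reverse inequality of the kind you need is true (take $u^2$ a sum of $N$ unit bumps at mutual distance tending to infinity: the nonlocal energy grows like $N$ while $\|u\|_{12/5}^4$ grows like $N^{5/3}$). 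There is also an internal inconsistency that signals the problem: on $B_1^c$ your weight $|x|^{-(q-s)/2}$ is simply bounded by $1$, so the tail integral closes for \emph{every} $q>s$; with $s=\tfrac{12}{5}<\tfrac{18}{7}$ your argument would prove the embedding for all $q>\tfrac{12}{5}$, strictly below the threshold $\tfrac{18}{7}$, which is sharp on this scale of spaces. The advertised ``balance'' producing $\tfrac{18}{7}$ is never actually carried out and cannot emerge from the ingredients you list.

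The missing idea is a pointwise \emph{lower} bound on the potential, not an upper bound via HLS. Since $\mathcal{K}(z)\ge (1-e^{-1})/|z|$ for $|z|\ge a$, one gets $\phi_u(x)\ge \frac{c}{|x|}\int_{B_{|x|/2}}u^2\,dy$ for $|x|\ge 2a$, and integrating this against $u^2$ over $B_R$ yields, via \eqref{eqS2.1},
\begin{equation*}
\int_{B_R}u^2\,dx\le C\bigl(R\,N(u)\bigr)^{1/2}+C\|\nabla u\|_2^2 ,
\end{equation*}
where $N(u)$ denotes the Bopp--Podolsky double integral. This is the genuine low-exponent information hidden in the nonlocal term. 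One then combines it not with the plain Strauss bound but with the annulus version of the radial lemma, $|u(r)|^2\le CR^{-2}\|u\|_{L^2(\{R\le|x|\le2R\})}\|\nabla u\|_{L^2(\{R\le|x|\le2R\})}$ for $r\in(R,2R)$, which gives $\sup_{R\le|x|\le 2R}|u|^{q-2}\lesssim R^{-7(q-2)/8}(\cdots)$, and hence $\int_{\{R\le|x|\le2R\}}|u|^q\lesssim R^{(18-7q)/8}(\cdots)$. Summing over dyadic $R=2^k$ converges precisely when $q>\tfrac{18}{7}$ --- this is where the exponent comes from. Once this quantitative estimate $\|u\|_q\le C\|u\|_E^{\gamma}$ is in hand for $q\in(\tfrac{18}{7},3)$, both your interior/tail decomposition and your compactness argument go through essentially as you wrote them.
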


For $\phi_u$, we have the following convergence properties.
\begin{lemma}\label{lm2.3}(\cite{Cd})
Let $\{u_n\}$ be a sequence in  $E$. We have that

\noindent
(i) \ if $u_n\rightharpoonup u$ in $E$, then $\phi_{u_n}\rightharpoonup\phi_u$ in $\mathcal{A}$;\\
(ii) \ $u_n\rightarrow u$ if and only if $u_n\rightarrow u$ in $D^{1,2}(\R^3)$ and $\phi_{u_n}\rightarrow\phi_u$ in $\mathcal{A}$.
\end{lemma}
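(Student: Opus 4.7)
The plan is to prove the two parts in sequence, relying on the Bopp-Podolsky weak formulation for (i) and on the Hilbert/uniform-convex space identity ``weak convergence $+$ norm convergence $=$ strong convergence'' (Radon-Riesz) for (ii). Throughout, I set $Q(f,g):=\int_{\R^3}\int_{\R^3}\mathcal{K}(x-y)f(x)g(y)\,dx\,dy$ so that, by \eqref{eqS2.1}, $\|\phi_u\|_{\mathcal{A}}^2=4\pi\,Q(u^2,u^2)$.

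For (i), the weak convergence $u_n\rightharpoonup u$ in $E$ gives boundedness of $\{u_n\}$, and therefore $\{Q(u_n^2,u_n^2)\}$ and hence $\{\|\phi_{u_n}\|_{\mathcal{A}}\}$ are bounded. Extract a subsequence $\phi_{u_n}\rightharpoonup\tilde{\phi}$ in the Hilbert space $\mathcal{A}$. To identify $\tilde{\phi}=\phi_u$, I test the weak form of the defining equation against any $\varphi\in C_0^\infty(\R^3)$:
\[
(\phi_{u_n},\varphi)_{\mathcal{A}}=4\pi\int_{\R^3}u_n^2\,\varphi\,dx.
\]
The left-hand side converges to $(\tilde{\phi},\varphi)_{\mathcal{A}}$. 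For the right-hand side, the boundedness of $\{u_n\}$ in $D^{1,2}(\R^3)$ and the Rellich-Kondrachov theorem on the compact set $\operatorname{supp}\varphi$ yield $u_n\to u$ in $L^2_{\operatorname{loc}}(\R^3)$, so $\int u_n^2\varphi\to\int u^2\varphi$. Hence $(\tilde{\phi}-\phi_u,\varphi)_{\mathcal{A}}=0$ for every $\varphi\in C_0^\infty(\R^3)$, and the density of $C_0^\infty(\R^3)$ in $\mathcal{A}$ forces $\tilde{\phi}=\phi_u$. A subsequence-of-subsequence argument (each cluster point equals $\phi_u$) upgrades the statement to the full sequence.

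For (ii), using $\|u\|^2=\|\nabla u\|_2^2+Q(u^2,u^2)^{1/2}$, I split the two implications. $(\Rightarrow)$ If $u_n\to u$ in $E$, then from the norm definition $\|\nabla(u_n-u)\|_2\to 0$, i.e., $u_n\to u$ in $D^{1,2}(\R^3)$; consequently $\|\nabla u_n\|_2\to\|\nabla u\|_2$ and, since $\|u_n\|\to\|u\|$, one obtains $Q(u_n^2,u_n^2)\to Q(u^2,u^2)$, i.e., $\|\phi_{u_n}\|_{\mathcal{A}}\to\|\phi_u\|_{\mathcal{A}}$. Combining this norm convergence with the weak convergence $\phi_{u_n}\rightharpoonup\phi_u$ from (i), the Radon-Riesz property of the Hilbert space $\mathcal{A}$ gives $\phi_{u_n}\to\phi_u$ in $\mathcal{A}$. $(\Leftarrow)$ Conversely, $u_n\to u$ in $D^{1,2}(\R^3)$ plus $\phi_{u_n}\to\phi_u$ in $\mathcal{A}$ give convergence of both summands of $\|u_n\|^2$, hence $\|u_n\|\to\|u\|$. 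Moreover $\{u_n\}$ is bounded in $E$ (since $\|\phi_{u_n}\|_{\mathcal{A}}$ is bounded and $\|\nabla u_n\|_2$ converges), and $u_n\rightharpoonup u$ in $E$ follows from uniqueness of the weak limit in $D^{1,2}(\R^3)$. Applying Radon-Riesz in the uniformly convex space $E$ of Lemma \ref{lm2.1}, I conclude $u_n\to u$ in $E$.

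The main obstacle I anticipate is the passage $\int u_n^2\varphi\to\int u^2\varphi$ in part (i), since $\varphi\in\mathcal{A}$ need not belong to Lebesgue spaces of arbitrary exponent and $u_n$ has no control in $L^2(\R^3)$. The trick that resolves this is to work first with test functions $\varphi\in C_0^\infty(\R^3)$, where the compact support reduces everything to a Rellich-Kondrachov argument, and then to use the density of $C_0^\infty(\R^3)$ in $\mathcal{A}$ (stated in Lemma \ref{lm2.1}) to extend the identification $\tilde{\phi}=\phi_u$ to the whole space $\mathcal{A}$. Once (i) is established, part (ii) is essentially bookkeeping via the Radon-Riesz mechanism applied separately in $\mathcal{A}$ and in $E$.
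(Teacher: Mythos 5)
The paper offers no proof of this lemma: it is quoted directly from \cite{Cd}, so there is nothing in the present text to compare your argument against. On its own merits your proof is correct and is the natural one. In (i) the only step you leave implicit is that $u_n\rightharpoonup u$ in $E$ passes, via the continuous embedding $E\hookrightarrow D^{1,2}(\R^3)$, to $u_n\rightharpoonup u$ in $D^{1,2}(\R^3)$; this is what identifies the Rellich--Kondrachov limit in $L^2(\operatorname{supp}\varphi)$ as $u$ rather than some unnamed cluster point, and it should be said. In (ii) your use of the Radon--Riesz (Kadec--Klee) property, in $\mathcal{A}$ for the forward implication and in the uniformly convex space $E$ of Lemma \ref{lm2.1} for the converse, is a clean way to avoid expanding the quartic quantity $\iint\mathcal{K}(x-y)(u_n-u)^2(x)(u_n-u)^2(y)\,dx\,dy$, whose relation to $\|\phi_{u_n}-\phi_u\|_{\mathcal{A}}$ is not linear; just note that the converse also uses reflexivity of $E$ (Milman--Pettis) to produce the weak limit before Radon--Riesz applies. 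With those two small points made explicit, the proof is complete.
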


We also state here an adaptation to the space $E$ of a result due to Lions, see Lemma I.1 of \cite{Lions}.
\begin{lemma}\label{lm2.4}
Let $\{u_n\}$ be a bounded sequence in  $E$, $q\in[3,6)$, and assume that
$$
\sup\limits_{y\in\R^3}\int_{B_R(y)}|u_n|^qdx\rightarrow0\quad\text{for some}\ R>0.
$$
Then $u_n\rightarrow0$ in $L^\alpha(\R^3)$ for any $\alpha\in(3,6)$.
\end{lemma}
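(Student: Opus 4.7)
The strategy is to adapt the classical proof of Lions' vanishing lemma (see, e.g., Willem's Lemma~1.21) to the space $E$. The essential tools that $E$ provides are: the embedding $E\hookrightarrow L^{r}(\R^3)$ for every $r\in[3,6]$ from Lemma \ref{lm2.1}, so $\{u_n\}$ is bounded in $L^q(\R^3)$ and in $L^6(\R^3)$; and the built-in control of $\|\nabla u_n\|_{L^2(\R^3)}$ by $\|u_n\|$, so $\{\nabla u_n\}$ is bounded in $L^2(\R^3)$.

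I would begin by covering $\R^3$ with a countable family of balls $\{B_R(y_i)\}_{i\in\N}$ of uniformly bounded overlap: there exists $N\in\N$ such that every point of $\R^3$ lies in at most $N$ of the balls, whence $\sum_i\int_{B_R(y_i)}f\,dx\leq N\int_{\R^3}f\,dx$ for every nonnegative $f$. Fix $\alpha\in(q,6)$ (the range $\alpha\in(3,q]$ will be recovered at the end). On each ball $B_i:=B_R(y_i)$ I would combine H\"older interpolation with the local Sobolev--type inequality
\[
\|u\|_{L^6(B_R)}\leq C\bigl(\|u\|_{L^q(B_R)}+\|\nabla u\|_{L^2(B_R)}\bigr),
\]
which follows from the $3$-dimensional Sobolev embedding $W^{1,2}(B_R)\hookrightarrow L^6(B_R)$ together with the local H\"older bound $L^q(B_R)\hookrightarrow L^2(B_R)$. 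Choosing the interpolation parameters so that the gradient appears with power exactly $2$, one arrives at a pointwise estimate of the form
\[
\int_{B_i}|u_n|^\alpha\,dx\leq C\Bigl(\int_{B_i}|u_n|^q\,dx\Bigr)^{\alpha/q}+C\Bigl(\int_{B_i}|u_n|^q\,dx\Bigr)^{\sigma}\int_{B_i}|\nabla u_n|^2\,dx
\]
for some $\sigma>0$ depending on $\alpha$ and $q$.

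Writing $\varepsilon_n:=\sup_{y\in\R^3}\int_{B_R(y)}|u_n|^q\,dx\to 0$ and summing over $i$, each ball-integral of $|u_n|^q$ may be bounded by $\varepsilon_n$ on one copy and absorbed into the global sums $\sum_i\int_{B_i}|u_n|^q\leq N\|u_n\|_{L^q(\R^3)}^q\leq C$ and $\sum_i\int_{B_i}|\nabla u_n|^2\leq N\|\nabla u_n\|_{L^2(\R^3)}^2\leq C$ on the other. This yields
\[
\int_{\R^3}|u_n|^\alpha\,dx\leq C\,\varepsilon_n^{(\alpha-q)/q}+C\,\varepsilon_n^{\sigma}\longrightarrow 0,
\]
proving the claim for $\alpha\in(q,6)$. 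For $\alpha\in(3,q]$ (relevant only when $q>3$) I would use the global H\"older interpolation $\|u_n\|_{L^\alpha(\R^3)}\leq\|u_n\|_{L^3(\R^3)}^{\mu}\|u_n\|_{L^\beta(\R^3)}^{1-\mu}$ with some $\beta\in(q,6)$, controlling the first factor by $E\hookrightarrow L^3$ and the second by the case already treated.

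The main technical obstacle is the construction of the per-ball estimate above: the interpolation exponent $t\in[q,6]$ used to interpolate $L^\alpha$ between $L^q$ and $L^t$ must be chosen so that (a) the gradient enters with power exactly $2$, permitting clean summation via finite overlap against the bounded quantity $\|\nabla u_n\|_{L^2(\R^3)}^2$, and (b) a strictly positive power of $\int_{B_i}|u_n|^q\,dx$ survives, supplying the vanishing factor $\varepsilon_n^\sigma$. Achieving both simultaneously requires a case split (depending on whether $\alpha\gtreqless 2+2q/3$) in the choice of $t$, but once the estimate is set up the remaining summation argument is essentially bookkeeping.
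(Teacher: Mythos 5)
Your proof is correct and follows the same two-step architecture as the paper: first obtain vanishing in $L^\alpha(\R^3)$ for $\alpha\in(q,6)$ via the Lions concentration--compactness mechanism (the paper simply cites Lemma I.1 of \cite{Lions} with $p=2$, using that boundedness in $E$ gives bounds on $\|u_n\|_q$ and $\|\nabla u_n\|_2$, whereas you re-derive that lemma by the covering-and-local-Sobolev argument), and then recover the range $\alpha\in(3,q]$ by interpolating against the $L^3$ bound coming from $E\hookrightarrow L^3(\R^3)$, exactly as in the paper. The only detail to tidy up in your per-ball estimate is that for $\alpha>2+2q/3$ the gradient cannot be made to enter with power exactly $2$ while keeping the auxiliary exponent $t\le 6$; one instead gets a power $\lambda\alpha>2$ and sums using $\sum_i a_i^{\lambda\alpha/2}\le\bigl(\sup_i a_i\bigr)^{\lambda\alpha/2-1}\sum_i a_i$ (or, more simply, proves the single case $\alpha=2+2q/3$ and interpolates against the bounded $L^q$ and $L^6$ norms), which does not affect the validity of your plan.
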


\begin{proof}
By applying Lemma I.1 of \cite{Lions} with $p=2$, we obtain that $u_n\rightarrow0$ in $L^\alpha(\R^3)$ for any $\alpha\in(q,6)$.
Recall now that  $\{u_n\}$ is bounded in  $E$, and hence in $L^3(\R^3)$ by Lemma \ref{lm2.1}. We conclude by interpolation.
\end{proof}

The following lemma is a generalization of Proposition 2.2 in \cite{SS} for $u\in H^1(\R^3)$.
\begin{lemma}\label{lm2.5}(\cite{Cd})
For all $u\in E$,
$$
\|u\|_3^3\le\frac1\pi\|\phi_u\|_\mathcal{A}\|\nabla u\|_2.
$$
\end{lemma}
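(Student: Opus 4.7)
The plan is to exploit the equation $-\Delta\phi_u + a^2\Delta^2\phi_u = 4\pi u^2$ together with an integration-by-parts identity. By Lemma \ref{lm2.1}, $C_0^\infty(\R^3)$ is dense in $E$, so it suffices to prove the inequality for $u \in C_0^\infty$ and then pass to the limit via Lemma \ref{lm2.3}; for such $u$, $\phi_u$ is smooth and decaying, while $|u|$ can be regularized by $\sqrt{u^2+\va}$ (with $\va \to 0^+$) to handle its nondifferentiability on $\{u=0\}$.

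The structural observation is that the auxiliary potential $\psi_u := \phi_u - a^2\Delta\phi_u$ satisfies the classical Poisson equation $-\Delta\psi_u = 4\pi u^2$. Multiplying by $|u|$ and integrating by parts yields
\[
4\pi\|u\|_3^3 = \int_{\R^3}\nabla|u|\cdot\nabla\psi_u\,dx,
\]
and Cauchy--Schwarz combined with $\bigl|\nabla|u|\bigr|\le |\nabla u|$ a.e.\ gives $4\pi\|u\|_3^3 \le \|\nabla u\|_2\|\nabla\psi_u\|_2$. The task then reduces to establishing $\|\nabla\psi_u\|_2 \le 4\|\phi_u\|_\mathcal{A}$. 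Testing $-\Delta\psi_u = 4\pi u^2$ against $\psi_u$ itself, expanding $\psi_u = \phi_u - a^2\Delta\phi_u$, invoking identity \eqref{eqS2.1}, and performing one integration by parts on the resulting cross term produces
\[
\|\nabla\psi_u\|_2^2 = \|\phi_u\|_\mathcal{A}^2 + 8\pi a^2\int_{\R^3} u\,\nabla u\cdot\nabla\phi_u\,dx,
\]
after which the residual integral is to be controlled via Cauchy--Schwarz, the Sobolev inequality $\|u\|_6\le S\|\nabla u\|_2$, and an $L^3$-bound on $\nabla\phi_u$ in terms of $\|\phi_u\|_\mathcal{A}$ that follows from $\mathcal{A}\hookrightarrow L^6$, the Calder\'on--Zygmund estimate $\|\nabla^2\phi_u\|_2\le C\|\Delta\phi_u\|_2$, and interpolation between $L^2$ and $L^6$.

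The chief obstacle is sharpness: a crude Cauchy--Schwarz on the cross term $\int u\nabla u\cdot\nabla\phi_u\,dx$ produces a multiplicative constant that is too large, so pinning down exactly $\tfrac{1}{\pi}$ requires either a Young inequality with an optimally chosen weight, or an alternative factorization of the operator $L := -\Delta + a^2\Delta^2$ that ensures only the first-derivative norm $\|\nabla u\|_2$ (and not a higher Sobolev norm of $u$) appears on the $u$-side of the estimate. Once the sharp bound $\|\nabla\psi_u\|_2 \le 4\|\phi_u\|_\mathcal{A}$ is secured, the claimed inequality follows from the two displayed estimates above, and the limiting step guaranteed by Lemma \ref{lm2.3} extends it from $C_0^\infty$ to all of $E$.
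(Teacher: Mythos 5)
The paper does not prove this lemma---it is quoted verbatim from \cite{Cd}---so your argument has to stand on its own, and it does not. The first half is fine: $\psi_u:=\phi_u-a^2\Delta\phi_u$ solves $-\Delta\psi_u=4\pi u^2$ (it is the Coulomb potential $\tfrac{1}{|\cdot|}*u^2$), and testing against $|u|$ gives $4\pi\|u\|_3^3\le\|\nabla u\|_2\|\nabla\psi_u\|_2$. The fatal step is the claimed bound $\|\nabla\psi_u\|_2\le 4\|\phi_u\|_\mathcal{A}$: it is false, and no constant can repair it. Testing $-\Delta\psi_u=4\pi u^2$ against $\psi_u$ and splitting $\tfrac{1}{|x|}=\tfrac{1-e^{-|x|/a}}{|x|}+\tfrac{e^{-|x|/a}}{|x|}$ gives, via \eqref{eqS2.1}, the exact identity
\[
\|\nabla\psi_u\|_2^2=4\pi\int_{\R^3}\int_{\R^3}\frac{u^2(x)u^2(y)}{|x-y|}\,dxdy
=\|\phi_u\|_\mathcal{A}^2+4\pi\int_{\R^3}\int_{\R^3}\frac{e^{-\frac{|x-y|}{a}}}{|x-y|}u^2(x)u^2(y)\,dxdy,
\]
and the Yukawa term is not controlled by $\|\phi_u\|_\mathcal{A}^2$: for $u_\lambda(x)=\lambda^{1/2}u(\lambda x)$ with $u\in C_0^\infty(\R^3)$ fixed and $\lambda\to\infty$, the Yukawa energy scales like $\lambda^{-3}$ while $\|\phi_{u_\lambda}\|_\mathcal{A}^2\sim 4\pi a^{-1}\lambda^{-4}\|u\|_2^4$, so the ratio blows up. Equivalently, your cross term $8\pi a^2\int u\,\nabla u\cdot\nabla\phi_u\,dx$ equals $a^2\|\Delta\phi_u\|_2^2+a^4\|\nabla\Delta\phi_u\|_2^2$, which involves a third derivative of $\phi_u$ that the $\mathcal{A}$-norm does not see. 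Your own estimate of that term already betrays this: Cauchy--Schwarz plus Sobolev yields at best $Ca^2\|\nabla u\|_2^2\|\phi_u\|_\mathcal{A}$, which is a different quantity from $\|\phi_u\|_\mathcal{A}^2$, so the obstacle is not ``pinning down $\tfrac1\pi$'' but that the intermediate inequality you are aiming for is structurally wrong. Since $\|\phi_u\|_\mathcal{A}\le\|\nabla\psi_u\|_2$ always, any reduction to the Coulomb case goes in the wrong direction.

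A further caution: the same scaling family makes the two sides of the target inequality itself behave like $\lambda^{-3/2}$ (left) and $\lambda^{-2}$ (right) along $u_\lambda$, so the estimate degenerates under concentration. Before investing more effort in this route you should check the precise statement and proof in \cite{Cd}, where the result is obtained as a lower bound for the Bopp--Podolsky double integral rather than by dominating the Coulomb energy; whatever the correct argument is, it cannot pass through a bound of the form $\|\nabla\psi_u\|_2\le C\|\phi_u\|_\mathcal{A}$.
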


Define $M:E\rightarrow\R$ as
$$
M(u):=\int_{\R^3}|\nabla u|^2dx+\int_{\R^3}\phi_u u^2dx.
$$
We can easily obtain the fact that for any $u\in E$,
\begin{equation}\label{eqS2.2}
\frac12\|u\|^4\le M(u)\le\|u\|^2,\quad\text{if either}\ \|u\|\le1\ \text{or}\ M(u)\le1.
\end{equation}
At this point, we have
\begin{lemma}\label{lm2.6}
There exists $C>0$ such that
$$
\|u\|_p^p\le CM(u)^{\frac{2p-3}{3}}
$$
for $u\in E$ with $p\in[3,6]$.
\end{lemma}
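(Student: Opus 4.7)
The plan is to establish the inequality at the two endpoints $p = 3$ and $p = 6$, and then to recover the intermediate cases by H\"{o}lder interpolation. The key observation I would use throughout is that, since both $\|\nabla u\|_2^2$ and $\int_{\R^3}\phi_u u^2\,dx$ are nonnegative and sum to $M(u)$, combined with \eqref{eqS2.1} this gives the pointwise controls
$$\|\nabla u\|_2^2 \le M(u), \qquad \|\phi_u\|_\mathcal{A}^2 \le 4\pi\, M(u).$$

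For the lower endpoint $p = 3$, I would invoke Lemma \ref{lm2.5} and substitute the two bounds above: after taking square roots, the product $\|\phi_u\|_\mathcal{A}\|\nabla u\|_2$ is dominated by a constant multiple of $M(u)$, which matches the right-hand side exponent $(2\cdot 3-3)/3 = 1$. For the upper endpoint $p = 6$, I would combine the Sobolev inequality $\|u\|_6^2 \le C\|\nabla u\|_2^2$ with $\|\nabla u\|_2^2 \le M(u)$ to obtain $\|u\|_6^6 \le C\,M(u)^3$, again matching $(2\cdot 6-3)/3 = 3$.

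For intermediate $p \in (3,6)$, I would pick $\theta = (6-p)/p \in (0,1)$ so that $1/p = \theta/3 + (1-\theta)/6$, which by H\"{o}lder gives the interpolation $\|u\|_p \le \|u\|_3^\theta \|u\|_6^{1-\theta}$. Raising to the $p$-th power and inserting the two endpoint estimates then yields $M(u)$ raised to the exponent $\theta p/3 + (1-\theta) p/2 = p(3-\theta)/6$, and a short algebraic check with $\theta = (6-p)/p$ simplifies this exponent to exactly $(2p-3)/3$. This completes the argument.

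I do not foresee a real obstacle: the lemma is essentially a bookkeeping statement built from Lemma \ref{lm2.5}, the identity \eqref{eqS2.1}, the Sobolev embedding of $D^{1,2}(\R^3)$ into $L^6(\R^3)$, and standard H\"{o}lder interpolation between $L^3$ and $L^6$. The only step requiring a moment of care is verifying that the exponent produced by the interpolation matches $(2p-3)/3$, but this reduces to an elementary algebraic identity.
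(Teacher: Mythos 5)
Your argument is correct and follows essentially the same route as the paper: interpolate $\|u\|_p \le \|u\|_3^{\theta}\|u\|_6^{1-\theta}$ with $\theta=(6-p)/p$, control $\|u\|_3$ via Lemma \ref{lm2.5} together with \eqref{eqS2.1}, and control $\|u\|_6$ via the Sobolev embedding of $D^{1,2}(\R^3)$; the exponent bookkeeping checks out. The only cosmetic difference is that you bound $\|\phi_u\|_\mathcal{A}$ and $\|\nabla u\|_2$ separately by $CM(u)^{1/2}$, whereas the paper first uses $ab\le a^2+b^2$ before invoking \eqref{eqS2.1} --- these are equivalent.
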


\begin{proof}
Since $D^{1,2}({\R^3})\hookrightarrow L^6(\R^3)$, for $\lambda=\frac{6-p}{p}\in[0,1]$ and by Lemma \ref{lm2.5}, one has
\begin{equation*}
\begin{split}
\ds \|u\|_p&\le\|u\|_3^\lambda\|u\|_6^{1-\lambda}\\[1mm]
  &\le C\left(\|\phi_u\|_\mathcal{A}\|\nabla u\|_2\right)^\frac\lambda3\|\nabla u\|_2^{1-\lambda}\\[1mm]
  &\le C\left(\|\phi_u\|_\mathcal{A}^2+\|\nabla u\|_2^2\right)^\frac\lambda3 M(u)^{\frac{1-\lambda}{2}}.
\end{split}
\end{equation*}
From \eqref{eqS2.1}, we get that
$$
\|u\|_p\le CM(u)^\frac\lambda3 M(u)^{\frac{1-\lambda}{2}}=C M(u)^{\frac{3-\lambda}{6}}.
$$
So the conclusion follows.
\end{proof}

The following General Minimax Principle is due to  Willem \cite{W}.

\begin{lemma}\label{lm2.7}(\cite{W})
Let $X$ be a Banach space. Let $M_0$ be a closed subspace of the metric space $M$ and $\Gamma_0\subset C(M_0,X)$. Define
$$
\Gamma:=\big\{\eta\in C(M,X)\ \big|\ \eta\big|_{M_0}\in\Gamma_0\big\}.
$$
If $\varphi\in C^1(X,\R)$ satisfies
$$
\infty>c:=\inf\limits_{\eta\in\Gamma}\sup\limits_{u\in M}\varphi(\eta(u))>a:=\sup\limits_{\eta_0\in\Gamma_0}\sup\limits_{u\in M_0}\varphi(\eta_0(u)),
$$
then, for every $\varepsilon\in(0,(c-a)/2),\ \delta>0$ and $\eta\in\Gamma$ such that $\sup\limits_{M}\varphi\circ\eta\leq c+\varepsilon$,
there exists $u\in X$ such that

\noindent
(a) \ $c-2\varepsilon\leq\varphi(u)\leq c+2\varepsilon$,\\
(b) \  $dist(u,\eta(M))\leq2\delta$,\\
(c) \  $\|\varphi'(u)\|\leq8\varepsilon/\delta$.
\end{lemma}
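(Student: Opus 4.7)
The plan is to argue by contradiction, combining a locally Lipschitz pseudo-gradient construction with a quantitative deformation argument. Suppose no $u \in X$ satisfies all three conclusions (a), (b), (c) simultaneously. Then on the closed set
$$
N := \varphi^{-1}([c-2\varepsilon, c+2\varepsilon]) \cap \{u \in X \mid \mathrm{dist}(u, \eta(M)) \le 2\delta\}
$$
one has $\|\varphi'(u)\| > 8\varepsilon/\delta$ everywhere.

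The first step is to build a continuous deformation $\sigma : [0,1] \times X \to X$ that fixes every point outside $N$, displaces points by at most $\delta$, and maps $\varphi^{c+\varepsilon} \cap \eta(M)$ into $\varphi^{c-\varepsilon}$. The construction is standard: on the regular set of $\varphi$ pick a locally Lipschitz pseudo-gradient field $V$ normalised so $\|V\| \le 1$ and $\langle \varphi'(u), V(u)\rangle \ge \tfrac12 \|\varphi'(u)\|$, multiply it by a Lipschitz cutoff $\chi$ vanishing outside $N$ and equal to $1$ on an inner region that absorbs all relevant trajectories, and solve $\dot\sigma(t,u) = -\delta\, \chi(\sigma)\, V(\sigma)$ with $\sigma(0,u) = u$. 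The $\delta$-rescaling forces the time-$1$ displacement to be bounded by $\delta$, and while a trajectory remains inside $N$ the identity
$$
\tfrac{d}{dt}\varphi(\sigma(t,u)) = -\delta\, \chi(\sigma)\, \langle \varphi'(\sigma), V(\sigma)\rangle \le -\delta \cdot \tfrac12 \cdot \tfrac{8\varepsilon}{\delta} = -4\varepsilon
$$
gives a drop in $\varphi$ of at least $4\varepsilon$ over $t \in [0,1]$; otherwise the trajectory has already exited through the lower wall $\{\varphi = c - 2\varepsilon\}$, which suffices for the same conclusion.

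The second step is the minimax application. Fix any $\eta \in \Gamma$ with $\sup_M \varphi \circ \eta \le c + \varepsilon$, which exists by the definition of $c$, and set $\eta_1 := \sigma(1, \cdot) \circ \eta \in C(M, X)$. Since $\varepsilon < (c-a)/2$ forces $a < c - 2\varepsilon$, every point $\eta(u)$ with $u \in M_0$ lies in $\varphi^a$, hence outside $N$, and is therefore fixed by $\sigma(1,\cdot)$; thus $\eta_1\big|_{M_0} = \eta\big|_{M_0} \in \Gamma_0$, which shows $\eta_1 \in \Gamma$. For arbitrary $u \in M$, either $\varphi(\eta(u)) < c - 2\varepsilon$, so $\eta_1(u) = \eta(u)$ and $\varphi(\eta_1(u)) < c - \varepsilon$, or $\eta(u) \in \varphi^{c+\varepsilon} \cap N$, in which case the deformation yields $\varphi(\eta_1(u)) \le c - \varepsilon$. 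Hence $\sup_M \varphi \circ \eta_1 \le c - \varepsilon < c$, contradicting the definition of $c$ as an infimum over $\Gamma$.

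The part I expect to be most delicate is the construction of the flow with the three quantitative constraints in play simultaneously: the support condition (to leave $\eta\big|_{M_0}$ untouched), the displacement bound $\delta$ (to secure (b) through the $2\delta$-neighborhood), and a decrease of $\varphi$ by more than $2\varepsilon$ along every trajectory starting in $\varphi^{c+\varepsilon}$ (to secure (a) and the strict inequality above). Matching these three requirements is precisely what forces the constant $8\varepsilon/\delta$ in (c), and, because we are working in a general Banach space $X$ where $\varphi'(u) \in X^{-1}$ admits no Riesz representative, it is the need for a locally Lipschitz pseudo-gradient that provides the main technical ingredient.
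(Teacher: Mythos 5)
The paper gives no proof of this lemma --- it is quoted verbatim as the General Minimax Principle from Willem's book \cite{W} --- and your argument is precisely the standard one found there: negate the conclusion to get $\|\varphi'\|>8\varepsilon/\delta$ on $\varphi^{-1}([c-2\varepsilon,c+2\varepsilon])\cap\{ \operatorname{dist}(\cdot,\eta(M))\le 2\delta\}$, run the quantitative deformation lemma built from a cut-off, normalized, locally Lipschitz pseudo-gradient flow, and contradict the minimax characterization of $c$ by composing with $\eta$ (the condition $\varepsilon<(c-a)/2$ guaranteeing the boundary values are untouched). The quantitative bookkeeping ($\delta$-bounded displacement, energy drop of $4\varepsilon\ge 2\varepsilon$ per unit time where the cut-off equals one) is consistent, so the proposal is correct and takes essentially the same route as the cited source.
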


\section{\label{sec3}Existence for $p\in(3,6)$}

In this section, we study the existence of  ground state solutions to system \eqref{eqS1.7} in the case $p\in(3,6)$.
The energy functional  associated with system \eqref{eqS1.7} is denoted as
$$
J(u)=\frac12\int_{\R^3}|\nabla u|^2dx+\frac{q^2}{4}\int_{\R^3}\phi_u u^2dx-\frac{\mu}{p}\int_{\R^3}|u|^pdx-\frac{1}{6}\int_{\R^3}|u|^6dx.
$$
In view of \cite{Cd}, $J$ is a $C^1$ functional on $E$ and
$$
\big\langle J'(u),\varphi\big\rangle=\int_{\R^3}\nabla u\nabla\varphi dx+q^2\int_{\R^3}\phi_uu\varphi dx-\mu\int_{\R^3}|u|^{p-2}u\varphi dx
-\int_{\R^3}|u|^{4}u\varphi dx,
$$
for all $\varphi\in E$. Moreover, critical points of $J$ in $E$ corresponding to weak solutions of system \eqref{eqS1.7}.
If $u\in E$ is a weak solution of system \eqref{eqS1.7},
by  \cite{d} we can deduce the following Pohozaev type identity
\begin{equation}\label{eqS3.1}
\begin{split}
\ds P(u)&:=\frac{1}{2} \int_{\R^3}|\nabla u|^2dx+\frac{q^2}{4}\int_{\R^3} \int_{\R^3} \left[\frac{5\left(1-e^{-\frac{|x-y|}{a}}\right)}{|x-y|}+\frac{e^{-\frac{|x-y|}{a}}}{a}\right]u^2(x)u^2(y)dxdy\\[1mm]
  &\quad -\frac{3\mu }{p} \int_{\R^3}|u|^pdx-\frac{1}{2}\int_{\R^3}|u|^{6}dx=0.
\end{split}
\end{equation}
Now,
we introduce the following Nehari-Pohozaev manifold:
$$
\mathcal{M}:=\big\{u\in E\setminus\{0\}\ \big|\ G(u)=0\big\},
$$
where
\begin{equation}\label{eqS3.2}
\begin{split}
\ds G(u)&:=\frac{3}{2}\int_{\R^3}|\nabla u|^2dx+\frac{3q^2}{4}\int_{\R^3} \int_{\R^3} \frac{1-e^{-\frac{|x-y|}{a}}-\frac{|x-y|}{3a}e^{-\frac{|x-y|}{a}}}{|x-y|}u^2(x)u^2(y)dxdy\\[1mm]
  &\quad -\frac{\mu(2p-3)}{p}  \int_{\R^3}|u|^pdx-\frac{3}{2}\int_{\R^3}|u|^{6}dx\\[1mm]
  &=2\big\langle J'(u),u\big\rangle-P(u).
\end{split}
\end{equation}

\begin{remarkn}\label{re3.1}
If $u\in E$ is a nontrivial weak solution of system \eqref{eqS1.7},  by \eqref{eqS3.1} and \eqref{eqS3.2} we see that $u\in\mathcal{M}$.
\end{remarkn}

\begin{lemma}\label{lm3.2}
For any $u\in E\setminus\{0\}$, there is a unique $\tilde{t}>0$ such that $u_{\tilde{t}}\in\mathcal{M}$, where
$u_{\tilde{t}}(x)=\tilde{t}^2u(\tilde{t}x)$. Moreover, $J(u_{\tilde{t}})=\max\limits_{t>0}J(u_t)$.
\end{lemma}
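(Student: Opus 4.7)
The plan is to reduce the statement to a one-variable analysis of the scalar function $h(t):=J(u_t)$ on $(0,\infty)$, and to show that $h$ attains its maximum at a unique critical point.

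First I would put $h$ in closed form. Under the dilation $u_t(x)=t^2 u(tx)$, the standard changes of variables give $\|\nabla u_t\|_2^2 = t^3\|\nabla u\|_2^2$, $\|u_t\|_p^p = t^{2p-3}\|u\|_p^p$, $\|u_t\|_6^6 = t^9\|u\|_6^6$, and a direct rescaling of the Bopp--Podolsky term yields
$$\int_{\R^3}\phi_{u_t}\,u_t^2\,dx \;=\; t^3\Phi(t),\qquad \Phi(t) := \int_{\R^3}\int_{\R^3}\frac{1-e^{-|x-y|/(ta)}}{|x-y|}u^2(x)u^2(y)\,dx\,dy.$$
Hence $h(t) = \tfrac{t^3}{2}\|\nabla u\|_2^2 + \tfrac{q^2 t^3}{4}\Phi(t) - \tfrac{\mu t^{2p-3}}{p}\|u\|_p^p - \tfrac{t^9}{6}\|u\|_6^6$.

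Next I would differentiate $h$ and compare with the functional $G$ in \eqref{eqS3.2}. Using $\Phi'(t) = -\tfrac{1}{at^2}\int\int e^{-|x-y|/(ta)}u^2(x)u^2(y)\,dx\,dy$, a term-by-term check shows that $t h'(t) = G(u_t)$ for every $t>0$, so the membership condition $u_t\in\mathcal{M}$ is exactly $h'(t)=0$. Since $p>3$, the dominant contribution to $h(t)$ as $t\to 0^+$ is the positive $\tfrac{t^3}{2}\|\nabla u\|_2^2$ (the other polynomial terms are of higher order, and one verifies $t^3\Phi(t)=o(1)$), while as $t\to+\infty$ the critical term $-\tfrac{t^9}{6}\|u\|_6^6$ drives $h(t)\to-\infty$. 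By continuity $h$ attains a strictly positive maximum at some $\tilde t\in(0,\infty)$, which satisfies $h'(\tilde t)=0$ and therefore $u_{\tilde t}\in\mathcal{M}$ with $J(u_{\tilde t})=\max_{t>0}J(u_t)$.

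The delicate step is uniqueness of $\tilde t$. I would divide $h'(t)=0$ by $t^2$ and separate the monotone contributions as
$$\underbrace{\tfrac{3}{2}\|\nabla u\|_2^2 + \tfrac{q^2}{4t^2}\bigl(t^3\Phi(t)\bigr)'}_{L(t)} \;=\; \underbrace{\tfrac{\mu(2p-3)}{p}t^{2p-6}\|u\|_p^p + \tfrac{3t^6}{2}\|u\|_6^6}_{R(t)}.$$
Since $p>3$, $R$ is strictly increasing from $0$ to $+\infty$. For $L$ I would compute the pointwise kernel derivative
$$\frac{\partial_t\bigl[t^3\bigl(1-e^{-r/(ta)}\bigr)\bigr]}{t^2} \;=\; 3-\bigl(3+\tfrac{r}{at}\bigr)e^{-r/(ta)};$$
setting $s:=r/(at)$ and noting that $s\mapsto (3+s)e^{-s}$ is strictly decreasing on $[0,\infty)$, this expression is non-increasing in $t$ for each fixed $r=|x-y|>0$, which forces $L$ to be non-increasing. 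Thus $R-L$ is strictly increasing and vanishes at exactly one point $\tilde t$, giving both uniqueness and the fact that $\tilde t$ is the unique maximizer. The main obstacle is precisely this monotonicity check, because the Bopp--Podolsky kernel does not scale homogeneously under the dilation $u\mapsto u_t$; once it is settled, everything else reduces to elementary one-variable calculus.
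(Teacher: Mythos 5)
Your proof is correct and follows essentially the same route as the paper: both write $J(u_t)$ in closed form, identify $G(u_t)=t\frac{d}{dt}J(u_t)$, and obtain uniqueness by separating the critical-point equation into a monotone balance. Your grouping $L(t)=R(t)$ is just the paper's equation $\frac32\|\nabla u\|_2^2=z(t)$ with the nonlocal term moved to the other side, and your kernel monotonicity check of $3-(3+s)e^{-s}$ is the same computation as the paper's verification that $z'(t)>0$.
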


\begin{proof}
For any $u\in E\setminus\{0\}$ and $t>0$, set $u_{t}(x):=t^2u(tx)$. Consider
\begin{equation*}
\begin{split}
\ds y(t):= J(u_t)&=
       \frac{1}{2} t^3\int_{\R^3}|\nabla u|^2dx+\frac{q^2}{4}t^3\int_{\R^3} \int_{\R^3} \frac{1-e^{-\frac{|x-y|}{ta}}}{|x-y|}u^2(x)u^2(y)dxdy\\[1mm]
  &\quad -\frac{\mu }{p} t^{2p-3}\int_{\R^3}|u|^pdx-\frac{1}{6}t^9\int_{\R^3}|u|^{6}dx.
\end{split}
\end{equation*}
It is easy to check that $y(t)>0$ for $t>0$ small and $y(t)\rightarrow-\infty$ as $t\rightarrow+\infty$,
which gives that
 $y(t)$ has a
critical point $\tilde{t}>0$ corresponding to its maximum, i.e., $y(\tilde{t})=\max\limits_{t>0}y(t)$ and $y'(\tilde{t})=0$.
Then
\begin{equation*}
\begin{split}
&\frac{3}{2} \tilde{t}^2\int_{\R^3}|\nabla u|^2dx+\frac{3q^2}{4}\tilde{t}^2\int_{\R^3} \int_{\R^3} \frac{1-e^{-\frac{|x-y|}{\tilde{t}a}}}{|x-y|}u^2(x)u^2(y)dxdy
-\frac{q^2}{4a}\tilde{t}\int_{\R^3} \int_{\R^3} e^{-\frac{|x-y|}{\tilde{t}a}}u^2(x)u^2(y)dxdy\\[1mm]
  &\quad -\frac{\mu (2p-3)}{p} \tilde{t}^{2p-4}\int_{\R^3}|u|^pdx-\frac{3}{2}\tilde{t}^8\int_{\R^3}|u|^{6}dx=0,
\end{split}
\end{equation*}
and hence $G(u_{\tilde{t}})=0,\ u_{\tilde{t}}\in\mathcal{M}$ and $J(u_{\tilde{t}})=\max\limits_{t>0}J(u_t)$.

Next we claim that the critical point of $y(t)$ is unique. Remark that $y'(t)=0$ and $t>0$ imply that
\begin{equation}\label{eqS3.3}
\begin{split}
\frac{3}{2}\int_{\R^3}|\nabla u|^2dx&=\frac{q^2}{4a}t^{-1}\int_{\R^3} \int_{\R^3} e^{-\frac{|x-y|}{ta}}u^2(x)u^2(y)dxdy
-\frac{3q^2}{4}\int_{\R^3} \int_{\R^3} \frac{1-e^{-\frac{|x-y|}{ta}}}{|x-y|}u^2(x)u^2(y)dxdy\\[1mm]
  &\quad +\frac{\mu (2p-3)}{p} t^{2p-6}\int_{\R^3}|u|^pdx+\frac{3}{2}t^6\int_{\R^3}|u|^{6}dx\\[1mm]
  &=:z(t).
\end{split}
\end{equation}
Since $p>3$, a simple calculation shows that $z(t)<0$ for $t>0$ small,  $z(t)\rightarrow+\infty$ as $t\rightarrow+\infty$
and for all $t>0$
\begin{equation*}
\begin{split}
z'(t)&=\frac{q^2}{2a}t^{-2}\int_{\R^3} \int_{\R^3} e^{-\frac{|x-y|}{ta}}u^2(x)u^2(y)dxdy
+\frac{q^2}{4a^2}t^{-3}\int_{\R^3} \int_{\R^3} |x-y|e^{-\frac{|x-y|}{ta}}u^2(x)u^2(y)dxdy\\[1mm]
  &\quad +\frac{\mu (2p-3)(2p-6)}{p} t^{2p-7}\int_{\R^3}|u|^pdx+9t^5\int_{\R^3}|u|^{6}dx>0.
\end{split}
\end{equation*}
Consequently, there exists a unique $\tilde{t}$ such that \eqref{eqS3.3} holds. Hence, the claim follows.
\end{proof}

\begin{lemma}\label{lm3.3}
The functional $J$ satisfies the following:

\noindent
(i) \ there exist $\alpha,\ \rho>0$ such that $J(u)\geq \alpha$ for all $\|u\|=\rho$;\\
(ii) \  there exists  $w\in E$ such that $\|w\|>\rho$ and $J(w)<0$.
\end{lemma}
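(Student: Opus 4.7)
The plan is to prove (i) by bounding $J(u)$ below in terms of $M(u)$ with higher-order corrections, and (ii) by invoking the scaling $u_t(x)=t^2u(tx)$ of Lemma \ref{lm3.2} and letting $t\to+\infty$.

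For (i), I first observe
\[
J(u)\ge c_0\,M(u)-\frac{\mu}{p}\|u\|_p^p-\frac16\|u\|_6^6,\qquad c_0:=\min\!\Big\{\tfrac12,\tfrac{q^2}{4}\Big\},
\]
using $M(u)=\|\nabla u\|_2^2+\int_{\R^3}\phi_uu^2\,dx$. Applying Lemma \ref{lm2.6} with exponents $p$ and $6$ gives
\[
\|u\|_p^p\le CM(u)^{\frac{2p-3}{3}},\qquad \|u\|_6^6\le CM(u)^3,
\]
and both exponents $(2p-3)/3$ and $3$ are strictly greater than $1$, since $p>3$. Therefore
\[
J(u)\ge M(u)\Big(c_0-C\mu M(u)^{\frac{2p-6}{3}}-CM(u)^2\Big).
\]
If $\|u\|=\rho\le 1$, then \eqref{eqS2.2} yields $\tfrac12\rho^4\le M(u)\le\rho^2$; choosing $\rho$ sufficiently small forces the parenthesis to be bounded below by $c_0/2$, whence $J(u)\ge(c_0/4)\rho^4=:\alpha>0$.

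For (ii), I fix an arbitrary $u\in E\setminus\{0\}$ and take $w=u_t$ with $t$ to be chosen large. From the explicit formula for $y(t)=J(u_t)$ already recorded in the proof of Lemma \ref{lm3.2}, the positive contributions $\tfrac{t^3}{2}\|\nabla u\|_2^2$ and the nonlocal term grow at most like $t^3$ (the latter is in fact $o(t^3)$ by dominated convergence, with dominating function $\frac{1-e^{-|x-y|/a}}{|x-y|}u^2(x)u^2(y)$, which is integrable by the very definition of $E$), while the critical term contributes $-\tfrac{t^9}{6}\|u\|_6^6$ and dominates because $2p-3<9$. Hence $y(t)\to-\infty$ as $t\to+\infty$. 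At the same time the change of variables gives $\|\nabla u_t\|_2^2=t^3\|\nabla u\|_2^2$, so $\|u_t\|^2\ge t^3\|\nabla u\|_2^2\to+\infty$. Taking $t$ large enough produces $w=u_t$ with $\|w\|>\rho$ and $J(w)<0$.

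There is no genuine obstacle here; the only point demanding attention is the simultaneous two-sided control $\tfrac12\|u\|^4\le M(u)\le\|u\|^2$ from \eqref{eqS2.2}, which is needed both to absorb the subcritical and critical corrections (upper bound on $M(u)$) and to produce a strictly positive lower estimate on the small sphere (lower bound on $M(u)$).
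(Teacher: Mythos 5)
Your proof is correct and follows essentially the same route as the paper: part (i) uses the lower bound $J(u)\ge \min\{\tfrac12,\tfrac{q^2}{4}\}M(u)-\tfrac{\mu C}{p}M(u)^{\frac{2p-3}{3}}-\tfrac{C}{6}M(u)^3$ together with the two-sided control \eqref{eqS2.2} on a small sphere, and part (ii) uses the scaling $u_t(x)=t^2u(tx)$ and the dominance of the $t^9$ term. The extra dominated-convergence remark on the nonlocal term is a harmless refinement the paper omits.
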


\begin{proof}
It follows from Lemma \ref{lm2.6} that
\begin{align*}
\begin{split}
\ds J(u)&=\frac 12\int_{\R^3}|\nabla u|^2dx+\frac{q^2}{4}\int_{\R^3}\phi_u u^2dx-\frac{\mu}{p}\int_{\R^3}|u|^pdx-\frac{1}{6}\int_{\R^3}|u|^6dx\\[1mm]
        &\geq \min\left\{\frac12,\frac{q^2}{4}\right\}M(u)-\frac{\mu C}{p}M(u)^{\frac{2p-3}{3}}-\frac{C}{6}M(u)^3\\[1mm]
        &\ge \frac12\min\left\{\frac12,\frac{q^2}{4}\right\}M(u)
\end{split}
\end{align*}
for $M(u)\le\delta$ by choosing $\delta$ small enough. Thanks to \eqref{eqS2.2}, we can take $\delta\in(0,1)$ such that
$$
J(u)\ge\frac12\min\left\{\frac12,\frac{q^2}{4}\right\}M(u)\ge\frac14\min\left\{\frac12,\frac{q^2}{4}\right\}\|u\|^4:=\alpha.
$$
Therefore, we conclude (i).

Fix $u\in E\setminus\{0\}$ and define, for any $t>0$,  $u_t(x)=t^2u(tx)$. We compute
\begin{equation*}
\begin{split}
\ds  J(u_t)
        &=\frac 12 t^3\int_{\R^3}|\nabla u|^2dx+\frac{q^2}{4}t^3\int_{\R^3} \int_{\R^3} \frac{1-e^{-\frac{|x-y|}{ta}}}{|x-y|}u^2(x)u^2(y)dxdy\\[1mm]
  &\quad -\frac{\mu }{p} t^{2p-3}\int_{\R^3}|u|^pdx-\frac{1}{6}t^9\int_{\R^3}|u|^{6}dx\\[1mm]
     &\rightarrow-\infty\quad\text{as}\ t\rightarrow+\infty,
\end{split}
\end{equation*}
 which gives (ii) if we take $w=u_t$ with $t$ sufficiently large.
\end{proof}

In view of Lemma \ref{lm3.3}, we can define the mountain pass level of $J$ as follows:
\begin{equation}\label{eqS3.4}
  c:=\inf\limits_{\eta\in\Gamma}\max\limits_{t\in[0,1]}J(\eta(t))>0,
\end{equation}
where
$$
\Gamma=\big\{\eta\in C([0,1],E)\ \big|\ \eta(0)=0,\ \eta(1)=w\big\}.
$$

Now, we will construct a (PS) sequence $\{u_n\}\subset E$ for $J$ at the level $c$ with
$G(u_n)\rightarrow0$ as $n\rightarrow\infty$.

\begin{proposition}\label{pro3.4}
There exists a sequence $\{u_n\}\subset E$  such that, as $n\rightarrow\infty$
$$
J(u_n)\rightarrow c,\ J'(u_n)\rightarrow0\quad\text{and}\quad G(u_n)\rightarrow0.
$$
\end{proposition}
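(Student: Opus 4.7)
The plan is to implement the scaling/fibering trick of Jeanjean and Hirata--Ikoma--Tanaka on an augmented space. Define
\[
  \Phi(\theta, u)(x) := e^{2\theta} u(e^{\theta} x), \qquad \tilde{J}(\theta, u) := J(\Phi(\theta, u)),
\]
on $\tilde{E} := \R \times E$ with the product norm. The algebraic identities $\Phi(0, \cdot) = \mathrm{id}_E$ and $\Phi(\theta_1+\theta_2, u) = \Phi(\theta_1, \Phi(\theta_2, u))$ let me lift any path $\eta \in \Gamma$ to $t \mapsto (0, \eta(t))$ and, conversely, push any $\tilde{\eta}(t) = (\theta(t), v(t))$ down to $t \mapsto \Phi(\theta(t), v(t)) \in \Gamma$ with identical values of the two functionals. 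Consequently, the augmented mountain-pass value
\[
  \tilde{c} := \inf_{\tilde{\eta} \in \tilde{\Gamma}} \max_{t \in [0,1]} \tilde{J}(\tilde{\eta}(t)), \qquad \tilde{\Gamma} := \big\{ \tilde{\eta} \in C([0,1], \tilde{E}) \ \big|\ \tilde{\eta}(0) = (0,0),\ \tilde{\eta}(1) = (0, w) \big\},
\]
coincides with the value $c$ from \eqref{eqS3.4}.

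Next I apply the General Minimax Principle (Lemma \ref{lm2.7}) to $\tilde{J}$ on $\tilde{E}$ along the class $\tilde{\Gamma}$ with $M = [0,1]$ and $M_0 = \{0, 1\}$. For each $n$, pick $\varepsilon_n \downarrow 0$ and a reference path $\tilde{\eta}_n(t) := (0, \eta_n(t))$ with $\eta_n \in \Gamma$ satisfying $\max_t J(\eta_n(t)) \le c + \varepsilon_n$, and set $\delta_n := \sqrt{\varepsilon_n}$. The lemma produces $(\theta_n, v_n) \in \tilde{E}$ with $\tilde{J}(\theta_n, v_n) \to c$, $\|\tilde{J}'(\theta_n, v_n)\|_{\tilde{E}^{-1}} \le 8\sqrt{\varepsilon_n} \to 0$, and $\mathrm{dist}_{\tilde{E}}\big((\theta_n, v_n),\, \tilde{\eta}_n([0,1])\big) \le 2\sqrt{\varepsilon_n}$; since the reference paths lie inside $\{0\} \times E$, this last estimate forces $|\theta_n| \le 2\sqrt{\varepsilon_n} \to 0$.

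It remains to set $u_n := \Phi(\theta_n, v_n)$ and transfer the three properties. The identity $J(u_n) = \tilde{J}(\theta_n, v_n) \to c$ is immediate. Differentiating $\tilde{J}$ in $\theta$ through the Bopp-Podolsky kernel (the same calculation as in the proof of Lemma \ref{lm3.2}, leading to \eqref{eqS3.3}) gives
\[
  \partial_\theta \tilde{J}(\theta, u) = G(\Phi(\theta, u)),
\]
whence $G(u_n) \to 0$. For the $u$-component, the chain rule yields $\partial_u \tilde{J}(\theta, u)[\varphi] = \langle J'(\Phi(\theta, u)),\, \Phi(\theta, \varphi)\rangle$; a direct scaling computation (the gradient term scales as $e^{3\theta}$, and the nonlocal term as $e^{3\theta}$ with the kernel parameter $a$ replaced by $e^{\theta}a$) shows that $\varphi \mapsto \Phi(\theta, \varphi)$ is an isomorphism of $E$ whose operator norm and inverse operator norm depend continuously on $\theta$. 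Because $\theta_n \to 0$, these norms stay uniformly bounded; writing a generic test function $\psi \in E$ as $\Phi(\theta_n, \Phi(-\theta_n, \psi))$ then converts $\partial_u \tilde{J}(\theta_n, v_n) \to 0$ in $E^{-1}$ into $J'(u_n) \to 0$ in $E^{-1}$.

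The main technical obstacle is the preparatory step: proving $\tilde{J} \in C^1(\tilde{E}, \R)$ with the partial-derivative formulas used above. This requires justifying differentiation under the double integral with the kernel $(1-e^{-|x-y|/(e^\theta a)})/|x-y|$ in both $\theta$ and $u$ by dominated convergence, with uniform integrability controlled via the embeddings in Lemma \ref{lm2.1} and the bound in Lemma \ref{lm2.5}; the same homogeneity calculation also delivers the continuity of $\theta \mapsto \Phi(\theta, \cdot)$ in the operator-norm sense on $E$ that is needed for the final transfer of the Palais-Smale condition.
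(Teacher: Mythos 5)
Your proposal is correct and follows essentially the same route as the paper: the augmented functional $J\circ\Phi$ on $\R\times E$, the identification $c=\tilde c$ via lifting/projecting paths, Lemma \ref{lm2.7} with $\delta_n=\sqrt{\varepsilon_n}$ (the paper's $\varepsilon_n=1/n^2$, $\delta_n=1/n$), the distance estimate forcing $\theta_n\to0$, and the same partial-derivative identities to transfer $G(u_n)\to0$ and $J'(u_n)\to0$ back to $u_n=\Phi(\theta_n,v_n)$. The only cosmetic difference is that you flag the $C^1$ regularity of $J\circ\Phi$ as a separate step to verify, which the paper handles implicitly by direct computation of \eqref{eqS3.9}.
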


\begin{proof}
Following the idea in \cite{HIT,J}, we define the map $\Phi:\R\times E\rightarrow E$ for
$\theta\in\R,\ v\in E$ and $x\in\R^3$ by $\Phi(\theta,v)=e^{2\theta}v(e^{\theta}x)$. For every $\theta\in\R,\ v\in E$,
the functional $J\circ\Phi$ is computed as
\begin{equation*}
\begin{split}
\ds   J\circ\Phi(\theta,v)
        &=\frac 12 e^{3\theta}\int_{\R^3}|\nabla v|^2dx+\frac{q^2}{4}e^{3\theta}\int_{\R^3} \int_{\R^3} \frac{1-e^{-\frac{|x-y|}{ae^\theta }}}{|x-y|}v^2(x)v^2(y)dxdy\\[1mm]
  &\quad -\frac{\mu }{p} e^{(2p-3)\theta}\int_{\R^3}|v|^pdx-\frac{1}{6}e^{9\theta}\int_{\R^3}|v|^{6}dx.
\end{split}
\end{equation*}
We can easily check that $J\circ\Phi(\theta,v)>0$ for all $(\theta,v)$ with $|\theta|,\ \|v\|$
small and $J\circ\Phi(0,w)<0$, where $w$ is given in Lemma \ref{lm3.3}. Hence, $J\circ\Phi$ possesses the mountain pass geometry in
$\R\times E$. As a result, we can define the mountain pass  level of $J\circ\Phi$ as follows:
\begin{equation}\label{eqS3.5}
  \tilde{c}:=\inf\limits_{\tilde{\eta}\in\tilde{\Gamma}}\max\limits_{t\in[0,1]}J\circ\Phi(\tilde{\eta}(t)),
\end{equation}
where
$$
\widetilde{\Gamma}=\big\{\tilde{\eta}\in C([0,1],\R\times E)\ \big|\ \tilde{\eta}(0)=(0,0),\ \tilde{\eta}(1)=(0,w)\big\}.
$$

\noindent
Since $\Gamma=\{\Phi\circ\tilde{\eta}\ \big|\ \tilde{\eta}\in\widetilde{\Gamma}\}$, the mountain pass level of $J$
coincides with the mountain pass level of
$J\circ\Phi$, i.e.,
$
c=\tilde{c}.
$
By applying Lemma \ref{lm2.7} with $M=[0,1]$, $M_0=\{0,1\}$, $\Gamma_0=\{\eta_0\}$, $\eta_0(0)=(0,0)$ and $\eta_0(1)=(0,w)$, we see that there exists a sequence $\{(\theta_n,v_n)\}_{n\in\N}$ in $\R\times E$ such that, as $n\rightarrow\infty$
\begin{equation}\label{eqS3.6}
  J\circ\Phi(\theta_n,v_n)\rightarrow c,
\end{equation}
\begin{equation}\label{eqS3.7}
    (J\circ\Phi)'(\theta_n,v_n)\rightarrow 0\quad\text{in}\quad(\R\times E)^{-1},
\end{equation}
\begin{equation}\label{eqS3.8}
  \theta_n\rightarrow0.
\end{equation}

\noindent
Indeed, set $\varepsilon=\varepsilon_n=\frac{1}{n^2},\ \delta=\delta_n=\frac{1}{n}$ in Lemma \ref{lm2.7}, then \eqref{eqS3.6} and \eqref{eqS3.7} are direct
conclusions from $(a)$ and $(c)$ of Lemma \ref{lm2.7}. Next, we just need to verify \eqref{eqS3.8}. In view of the definition of $c$ in \eqref{eqS3.4},
for $\varepsilon=\varepsilon_n=\frac{1}{n^2},\ \exists\eta_n\in\Gamma$ such that
$$
\max\limits_{t\in[0,1]}J(\eta_n(t))\leq c+\frac{1}{n^2}.
$$
Set $\tilde{\eta}_n(t)=(0,\eta_n(t))$, then
$$
\max\limits_{t\in[0,1]}J\circ\Phi(\tilde{\eta}_n(t))=\max\limits_{t\in[0,1]}J(\eta_n(t))\leq c+\frac{1}{n^2}.
$$
By $(b)$ of Lemma \ref{lm2.7}, there exists $(\theta_n,v_n)\in \R\times E$ such that $dist((\theta_n,v_n),(0,\eta_n(t)))\leq\frac2n$,
so \eqref{eqS3.8} holds.

Set $u_n=\Phi(\theta_n,v_n)$,  \eqref{eqS3.6} gives that
\begin{equation*}
  J(u_n)\rightarrow c\quad\text{as}\quad n\rightarrow\infty.
\end{equation*}
Since for any $(h,z)\in\R\times E$,
\begin{equation}\label{eqS3.9}
\begin{split}
 \big\langle(J\circ\Phi)'(\theta_n,v_n),(h,z)\big\rangle&=e^{3\theta_n}\int_{\R^3}\nabla v_n\nabla zdx+q^2e^{3\theta_n}\int_{\R^3} \int_{\R^3}
 \frac{1-e^{-\frac{|x-y|}{ae^{\theta_n }}}}{|x-y|}v_n^2(y)v_n(x)z(x)dxdy\\[1mm]
 &\quad -\mu e^{(2p-3)\theta_n}\int_{\R^3}|v_n|^{p-2}v_nzdx-e^{9\theta_n}\int_{\R^3}|v_n|^4v_nzdx\\[1mm]
 &\quad+\Bigg[\frac{3}{2} e^{3\theta_n}\int_{\R^3}|\nabla v_n|^2dx+\frac{3q^2}{4}e^{3\theta_n}\int_{\R^3} \int_{\R^3} \frac{1-e^{-\frac{|x-y|}{ae^{\theta_n}}}}{|x-y|}v_n^2(x)v_n^2(y)dxdy\\[1mm]
 &\quad\quad\quad-\frac{q^2}{4a}e^{2\theta_n}\int_{\R^3} \int_{\R^3} e^{-\frac{|x-y|}{ae^{\theta_n}}}v_n^2(x)v_n^2(y)dxdy\\[1mm]
 &\quad\quad\quad-\frac{\mu (2p-3)}{p} e^{(2p-3)\theta_n}\int_{\R^3}|v_n|^pdx
 -\frac{3}{2}e^{9\theta_n}\int_{\R^3}|v_n|^{6}dx\Bigg]h\\[1mm]
 &=\big\langle J'(\Phi(\theta_n,v_n)),\Phi(\theta_n,z)\big\rangle
        +G(\Phi(\theta_n,v_n))h.
\end{split}
\end{equation}
Taking $(h,z)=(1,0)$, a consequence of \eqref{eqS3.7}, we get
$$
G(u_n)\rightarrow0\quad\text{as}\quad n\rightarrow\infty.
$$
For any $v\in E$, set $z(x)=e^{-2\theta_n}v(e^{-\theta_n}x),\ h=0$ in \eqref{eqS3.9}, we conclude
$$
\big\langle J'(u_n),v\big\rangle=\big\langle J'(\Phi(\theta_n,v_n)),\Phi(\theta_n,z)\big\rangle=
o_n(1)\|\Phi(\theta_n,z)\|=o_n(1)\|v\|
$$
for $\theta_n\rightarrow0$ as $n\rightarrow\infty$. Thus, we have
$$
J'(u_n)\rightarrow0\ \text{in}\  E^{-1} \quad\text{as} \quad n\rightarrow\infty.
$$
Therefore, the lemma is proved.
\end{proof}

\begin{lemma}\label{lm3.5}
The sequence $\{u_n\}\subset E$ given in Proposition \ref{pro3.4} is bounded.
\end{lemma}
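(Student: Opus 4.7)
The plan is to find a linear combination of $J(u_n)$ and $G(u_n)$ in which the nonlocal term $\int\phi_{u_n}u_n^2\,dx$ still has a positive coefficient while the super-cubic term $\int|u_n|^p\,dx$ is killed. More precisely, I expect to work with $J(u_n) - \frac{1}{2p-3}G(u_n)$, since the choice of constant $\frac{1}{2p-3}$ is precisely what makes the coefficient of $\int|u_n|^p\,dx$ vanish (the Lagrange multiplier naturally associated with the homogeneity $2p-3$ of that term under the scaling $u\mapsto t^2u(tx)$).

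Writing $A = \int_{\R^3}|\nabla u|^2\,dx$, $B=\int_{\R^3}\phi_u u^2\,dx$, $B_3 = \int\!\!\int e^{-|x-y|/a}u^2(x)u^2(y)\,dxdy$, $C=\int_{\R^3}|u|^p\,dx$ and $D=\int_{\R^3}|u|^6\,dx$, a direct calculation using the definitions of $J$ and $G$ gives
\begin{equation*}
J(u) - \frac{1}{2p-3}\,G(u) \;=\; \frac{p-3}{2p-3}\,A \;+\; \frac{(p-3)q^2}{2(2p-3)}\,B \;+\; \frac{q^2}{4a(2p-3)}\,B_3 \;+\; \frac{6-p}{3(2p-3)}\,D.
\end{equation*}
For $p\in(3,6)$ every coefficient on the right is strictly positive. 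Applying this identity to $u_n$ and using the asymptotics $J(u_n)\to c$ and $G(u_n)\to 0$ from Proposition \ref{pro3.4}, I obtain that each of the nonnegative quantities $\int|\nabla u_n|^2\,dx$ and $\int\phi_{u_n}u_n^2\,dx$ is bounded in $n$; in particular $M(u_n)=\int|\nabla u_n|^2\,dx+\int\phi_{u_n}u_n^2\,dx$ is bounded.

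To conclude the boundedness of $\|u_n\|$ itself, I recall that by the very definition of the norm,
\begin{equation*}
\|u_n\|^2 \;=\; \int_{\R^3}|\nabla u_n|^2\,dx \;+\; \Bigl(\int_{\R^3}\!\!\int_{\R^3}\tfrac{1-e^{-|x-y|/a}}{|x-y|}u_n^2(x)u_n^2(y)\,dxdy\Bigr)^{1/2},
\end{equation*}
so the boundedness of both summands appearing in $M(u_n)$ immediately translates into boundedness of $\|u_n\|$. I do not expect any serious obstacle here: once the algebraic identity above is verified, the rest is automatic. The only subtle point is algebraic, namely checking that the coefficient of $D$ is positive precisely because $p<6$ and that the coefficient of $A,B$ is positive precisely because $p>3$, which explains why the argument covers the full range $p\in(3,6)$ without the need for an Ambrosetti--Rabinowitz-type condition.
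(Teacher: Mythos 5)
Your proposal is correct and takes essentially the same route as the paper: the paper's proof uses exactly the combination $J(u_n)-\frac{1}{2p-3}G(u_n)$ and arrives at the same identity with the same four positive coefficients $\frac{p-3}{2p-3}$, $\frac{q^2(p-3)}{2(2p-3)}$, $\frac{q^2}{4a(2p-3)}$, $\frac{6-p}{3(2p-3)}$. The only difference is cosmetic: you make explicit the final passage from boundedness of $\int_{\R^3}|\nabla u_n|^2dx$ and $\int_{\R^3}\phi_{u_n}u_n^2dx$ to boundedness of $\|u_n\|$ via the definition of the norm, a step the paper leaves implicit.
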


\begin{proof}
To show the boundedness of $\{u_n\}$, we note that
\begin{equation*}
\begin{split}
\ds  c+o_n(1)&=J(u_n)-\frac{1}{2p-3}G(u_n)\\[1mm]
        &=\frac{p-3}{2p-3} \int_{\R^3}|\nabla u_n|^2dx+\frac{q^2(p-3)}{2(2p-3)}\int_{\R^3}\phi_{u_n} u_n^2dx\\[1mm]
        &\quad+\frac{q^2}{4a(2p-3)}\int_{\R^3} \int_{\R^3} e^{-\frac{|x-y|}{a}}u_n^2(x)u_n^2(y)dxdy
                 +\frac{6-p}{3(2p-3)}\int_{\R^3}|u_n|^6dx,
\end{split}
\end{equation*}
which implies that $\|u_n\|$ is bounded in $E$.
\end{proof}

In order to get the existence of ground state solutions, the following lemma will be used.

\begin{lemma}\label{lm3.6}
If $u\in\mathcal{M}$, then $J(u)\ge c$.
\end{lemma}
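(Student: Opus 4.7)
The plan is to use the fibering provided by Lemma \ref{lm3.2} to realize $J(u)$ as the maximum of $J$ along a mountain-pass-admissible path. First, given $u\in\mathcal{M}$, I apply Lemma \ref{lm3.2} to $u$ itself: since the parameter $\tilde t$ it produces is unique and since $u=u_1$ already lies in $\mathcal{M}$, we must have $\tilde t=1$, and hence
\[
J(u)=\max_{t>0}J(u_t),
\]
where, as before, $u_t(x)=t^2u(tx)$.

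Next, I would construct a path in $\Gamma$ from $0$ to $w$ whose maximum of $J$ is at most $J(u)$. Note that $u_t\to 0$ in $E$ as $t\to 0^+$ (so setting $u_0:=0$ gives continuity), while the computation in the proof of Lemma \ref{lm3.3}(ii) yields $J(u_t)\to-\infty$ as $t\to+\infty$. Fix $T>0$ large enough that $J(u_T)<0$ and $\|u_T\|>\rho$, and define $\eta_u(s):=u_{sT}$ for $s\in[0,1]$. This is a continuous curve starting at $0$, ending at a point of negative energy and large norm, with
\[
\max_{s\in[0,1]}J(\eta_u(s))\;\le\;\max_{t>0}J(u_t)\;=\;J(u).
\]

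The remaining ingredient is matching the specific endpoint $w$ fixed in \eqref{eqS3.4}. The standard fact I would invoke is that the mountain-pass level $c$ is independent of the particular endpoint beyond the mountain; concretely, $\eta_u$ can be concatenated with a path from $u_T$ to $w$ lying in the sublevel set $\{J<0\}$. Such a connecting path exists because both $u_T$ and $w$ can be pushed, by further dilation $v\mapsto v_\tau$ with large $\tau$, arbitrarily deep into $\{J<-K\}$, inside which connectivity is arranged by a linear segment between the deeply dilated representatives. The resulting full path lies in $\Gamma$ and has $\max J\le J(u)$, giving $c\le J(u)$.

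The delicate step is precisely the last one: ensuring the endpoint matching is achieved without losing the bound $\max J\le J(u)$. A naive straight segment from $u_T$ to $w$ may well cross the mountain, so the remedy is either to invoke the (standard) invariance of $c$ under change of endpoint within $\{J<0\}\cap\{\|\cdot\|>\rho\}$, or to perform the matching explicitly via a two-parameter dilation deformation that stays in the far region where $J\to-\infty$. This is the only place where genuine care is required; everything else is a direct consequence of Lemma \ref{lm3.2} and the asymptotic behaviour of $J(u_t)$ established in Lemma \ref{lm3.3}.
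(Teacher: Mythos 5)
Your proof is correct and follows essentially the same route as the paper: use Lemma \ref{lm3.2} to write $J(u)=\max_{t>0}J(u_t)$ and then test the mountain-pass infimum with the dilation path $s\mapsto u_{sT}$ (the paper's $\eta_0(t)=u_{t_0t}$). If anything you are more careful than the paper, which simply asserts $\eta_0\in\Gamma$ without addressing the endpoint condition $\eta_0(1)=w$ that you correctly identify as the one point requiring the standard ``independence of the endpoint within the negative sublevel set'' argument.
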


\begin{proof}
For any $u\in\mathcal{M}$, it follows from Lemma \ref{lm3.2} that
\begin{equation}\label{eqS3.10}
J(u)=\max\limits_{t>0}J(u_t).
\end{equation}
Note that  $\lim\limits_{t\rightarrow+\infty}J(u_t)=-\infty$, then we can take $t_0$
large enough such that $J(u_{t_0})<0$. Define $\eta_0:[0,1]\rightarrow E$ by
\begin{equation*}
\renewcommand{\arraystretch}{1.25}
\begin{array}{ll}
\ds \eta_0(t)=\left \{ \begin{array}{ll}
\ds u_{t_0t}, &\quad t\in(0,1],\\
0,  &\quad t=0,\\
\end{array}
\right .
\end{array}
\end{equation*}
then we have $\eta_0\in\Gamma$. Combining this with \eqref{eqS3.10} we conclude that
$$
J(u)=\max\limits_{t>0}J(u_t)
           \geq\max\limits_{t\in[0,1]}J(\eta_0(t))\ge c.
$$
\end{proof}

In the following, we recall an important energy estimate for  $c$ which has been proved in \cite{LPT}.

\begin{lemma}\label{lm3.7}
If $p\in(4,6)$, then $c<\frac13 S^{\frac32}$ for all $\mu>0$; if $p\in(3,4]$, then $c<\frac13 S^{\frac32}$ for $\mu>0$ sufficiently large, where $S$ is the best Sobolev constant for the embedding $D^{1,2}(\R^3)\hookrightarrow L^{6}(\R^3)$.
\end{lemma}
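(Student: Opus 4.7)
The plan is to construct an explicit continuous path in $E$ from $0$ to a point where $J$ is negative, and to show that the maximum of $J$ along this path is strictly below $\frac{1}{3}S^{3/2}$; by the minimax definition \eqref{eqS3.4} of $c$ this yields the claim. The natural candidates are the Aubin--Talenti instantons $U_\varepsilon(x):=(3\varepsilon^2)^{1/4}(\varepsilon^2+|x|^2)^{-1/2}$, which saturate $\int_{\R^3}|\nabla U_\varepsilon|^2\,dx=\int_{\R^3}U_\varepsilon^6\,dx=S^{3/2}$. Since $U_\varepsilon\notin L^3(\R^3)$, Lemma \ref{lm2.1} gives $U_\varepsilon\notin E$, so I would first truncate: fix $\eta\in C_c^\infty(\R^3)$ with $\eta\equiv 1$ on $B_1$ and $\mathrm{supp}\,\eta\subset B_2$, set $u_\varepsilon:=\eta U_\varepsilon\in E$, and take as test path the dilation family $u_{\varepsilon,t}(x):=t^2u_\varepsilon(tx)$ used in Lemma \ref{lm3.2}. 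For $T$ large one has $J(u_{\varepsilon,T})<0$, and therefore $c\le\max_{t\in[0,T]}J(u_{\varepsilon,t})$.

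Next I would assemble four scalar asymptotics. The standard Brezis--Nirenberg calculations in dimension three give
\[
\int_{\R^3}|\nabla u_\varepsilon|^2\,dx=S^{3/2}+O(\varepsilon),\qquad \int_{\R^3}|u_\varepsilon|^6\,dx=S^{3/2}+O(\varepsilon^3),
\]
while direct computation on $B_1$, where $\eta\equiv 1$, yields the crucial subcritical gain
\[
\int_{\R^3}|u_\varepsilon|^p\,dx\ge K_p\,\varepsilon^{3-p/2}\qquad\text{for some }K_p>0.
\]
For the nonlocal Bopp--Podolsky integral, the pointwise bound $(1-e^{-r/(ta)})/r\le 1/r$ (for all $t>0$) combined with the Hardy--Littlewood--Sobolev inequality, applied to the compactly supported $u_\varepsilon^2\in L^{6/5}(\R^3)$, gives the uniform-in-$t$ bound
\[
\int_{\R^3}\!\int_{\R^3}\frac{1-e^{-|x-y|/(ta)}}{|x-y|}\,u_\varepsilon^2(x)u_\varepsilon^2(y)\,dxdy\le C\,\|u_\varepsilon\|_{12/5}^4\le C'\,\varepsilon^2;
\]
truncation is essential here because $U_\varepsilon\notin L^{12/5}(\R^3)$. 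Plugging these four bounds into the explicit formula for $J(u_{\varepsilon,t})$, the pure--Sobolev part $\tfrac{t^3}{2}\!\int|\nabla u_\varepsilon|^2\,dx-\tfrac{t^9}{6}\!\int|u_\varepsilon|^6\,dx$ has maximum $\tfrac{1}{3}S^{3/2}+O(\varepsilon)$, the Bopp--Podolsky contribution is at most $O(\varepsilon^2)$, and near the maximizer the $L^p$ piece subtracts a positive quantity of order $\mu\,\varepsilon^{3-p/2}$.

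The proof then closes by an order comparison. For $p\in(4,6)$ one has $3-p/2<1$, so $\mu\,\varepsilon^{3-p/2}$ dominates both $O(\varepsilon)$ and $O(\varepsilon^2)$ as $\varepsilon\to 0^+$ for every fixed $\mu>0$; choosing $\varepsilon$ sufficiently small gives $c<\tfrac{1}{3}S^{3/2}$. For $p\in(3,4]$ one has $3-p/2\ge 1$, so the $L^p$ gain is only comparable to (or smaller than) the Sobolev error; fixing $\varepsilon>0$ small and letting $\mu$ be large enough forces $\mu\,\varepsilon^{3-p/2}$ to exceed the error and again yields $c<\tfrac{1}{3}S^{3/2}$. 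The main technical hurdle, I expect, is controlling the location of the maximizer $t_*=t_*(\varepsilon,\mu)$ of $t\mapsto J(u_{\varepsilon,t})$: the estimates above rest on $t_*$ staying in a fixed compact subinterval of $(0,\infty)$, which must be verified in the joint regime of small $\varepsilon$ and possibly large $\mu$. However, the positive cubic terms dominate for $t$ small while the $-t^9$ term dominates for $t$ large, so $t_*$ is pinned in such a subinterval throughout the relevant parameter range, and the order comparison then closes the argument.
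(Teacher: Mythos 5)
The first thing to say is that the paper does not prove this lemma at all: it is introduced with ``we recall an important energy estimate for $c$ which has been proved in \cite{LPT}'', i.e.\ the proof is by citation to Li--Pucci--Tang, whose setting is the positive-mass system in $H^1(\R^3)$. Your proposal therefore takes a genuinely different (and arguably more honest) route: a self-contained Brezis--Nirenberg test-function computation carried out directly in the zero-mass framework $E$, along the dilation $u_t(x)=t^2u(tx)$ that the paper uses everywhere else. The individual ingredients are correct. Truncation is indeed forced, since $U_\varepsilon\notin L^3(\R^3)$ while $E\hookrightarrow L^3(\R^3)$ by Lemma \ref{lm2.1}; the three-dimensional asymptotics $\int|\nabla u_\varepsilon|^2=S^{3/2}+O(\varepsilon)$, $\int u_\varepsilon^6=S^{3/2}+O(\varepsilon^3)$ and $\int u_\varepsilon^p\ge K_p\varepsilon^{3-p/2}$ (the last valid precisely because $p>3$) are standard; the Hardy--Littlewood--Sobolev bound $C\|u_\varepsilon\|_{12/5}^4\le C'\varepsilon^2$ for the nonlocal term, uniform in $t$ because $(1-e^{-r/(ta)})/r\le 1/r$, is exactly what is needed; and $\max_{s>0}\bigl[\tfrac{s^3}{2}A-\tfrac{s^9}{6}B\bigr]=\tfrac13A^{3/2}B^{-1/2}$ gives the leading term $\tfrac13S^{3/2}+O(\varepsilon)$. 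Since $3-\tfrac p2<1$ exactly when $p>4$, the order comparison closes the first case, and this matches the dichotomy in the statement.

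Two points need tightening. First, your claim that the maximizer $t_*(\varepsilon,\mu)$ of $t\mapsto J(u_{\varepsilon,t})$ stays in a fixed compact subinterval of $(0,\infty)$ ``throughout the relevant parameter range'' is false in the regime $p\in(3,4]$, $\varepsilon$ fixed, $\mu\to\infty$: the term $-\tfrac{\mu}{p}t^{2p-3}\int u_\varepsilon^p$ drives $t_*\to0$, so the $L^p$ gain evaluated at $t_*$ does not scale like $\mu$. The conclusion survives via the standard dichotomy, which you should run in both cases: fix $\delta>0$ small; for $t\le\delta$ the negative terms give $J(u_{\varepsilon,t})\le\tfrac{t^3}{2}\bigl(\int|\nabla u_\varepsilon|^2+\tfrac{q^2}{2}C'\varepsilon^2\bigr)\le C\delta^3<\tfrac13S^{3/2}$ outright, while for $t\ge\delta$ one gets $J(u_{\varepsilon,t})\le\tfrac13S^{3/2}+O(\varepsilon)-\tfrac{\mu}{p}K_p\delta^{2p-3}\varepsilon^{3-p/2}$, which is below $\tfrac13S^{3/2}$ for $\varepsilon$ small when $p>4$, and for $\mu$ large (with $\varepsilon$ fixed small) when $p\in(3,4]$. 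Second, a bookkeeping point: the class $\Gamma$ in \eqref{eqS3.4} is tied to the particular endpoint $w$ of Lemma \ref{lm3.3}, so your path $t\mapsto u_{\varepsilon,Tt}$ is not literally in $\Gamma$; either choose $w$ on the dilation ray of $u_\varepsilon$ from the outset or note that the level is unaffected by replacing the endpoint with any point of negative energy --- a flexibility the paper itself already uses without comment in the proof of Lemma \ref{lm3.6}. With these repairs your argument is a complete proof, and it has the added value of verifying in the space $E$ an estimate the paper only imports from the $H^1$ setting.
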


\begin{lemma}\label{lm3.8}
There exist a sequence $\{y_n\}\subset\R^3$ and constants  $R,\ \beta>0$ such that for $q\in[3,6)$,
$$
\liminf\limits_{n\rightarrow\infty}\int_{B_R(y_n)} |u_n|^qdx\geq\beta>0,
$$
where the  sequence $\{u_n\}\subset E$ is given in Proposition \ref{pro3.4}.
\end{lemma}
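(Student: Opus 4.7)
The plan is a standard concentration-compactness argument by contradiction: reduce the failure of non-vanishing to the strong $L^\alpha$-vanishing of $u_n$ via the Lions-type Lemma \ref{lm2.4}, then extract limits from the $(\text{PS})_c$ identities and exploit the critical threshold estimate of Lemma \ref{lm3.7} to derive a contradiction.

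First I would suppose toward contradiction that the conclusion fails for some fixed exponent $r\in[3,6)$, so that
$$
\sup_{y\in\R^3}\int_{B_R(y)}|u_n|^r\,dx\to 0 \quad\text{for every } R>0.
$$
Since $\{u_n\}$ is bounded in $E$ by Lemma \ref{lm3.5}, Lemma \ref{lm2.4} then yields $u_n\to 0$ strongly in $L^\alpha(\R^3)$ for every $\alpha\in(3,6)$. In particular, since $p\in(3,6)$, one has $\int_{\R^3}|u_n|^p\,dx\to 0$.

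Next, I would exploit the PS-type identities. Using boundedness together with $\langle J'(u_n),u_n\rangle=o_n(1)$ and the $L^p$-vanishing, I would obtain
$$
\int|\nabla u_n|^2\,dx+q^2\int\phi_{u_n}u_n^2\,dx-\int|u_n|^6\,dx=o_n(1),
$$
while $J(u_n)\to c$ (together with the same $L^p$ vanishing) gives
$$
\tfrac{1}{2}\int|\nabla u_n|^2\,dx+\tfrac{q^2}{4}\int\phi_{u_n}u_n^2\,dx-\tfrac{1}{6}\int|u_n|^6\,dx=c+o_n(1).
$$
All three integrals are bounded (the first two via the $E$-norm and Lemma \ref{lm3.5}, the third via the Sobolev embedding $D^{1,2}\hookrightarrow L^6$), so along a subsequence they converge to limits $\ell, m, L\geq 0$ satisfying $L=\ell+q^2 m$ and, after eliminating $q^2 m$, $c=\ell/4+L/12$.

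Finally, I would close the argument with the Sobolev inequality $\ell\geq SL^{1/3}$. Combined with $\ell\leq L$ (a consequence of $m\geq 0$), this forces $L\geq S^{3/2}$ as soon as $L>0$; the case $L=0$ trivially gives $c=0$, contradicting $c>0$. Since $L\mapsto \tfrac{SL^{1/3}}{4}+\tfrac{L}{12}$ is increasing, one concludes
$$
c=\tfrac{\ell}{4}+\tfrac{L}{12}\geq \tfrac{SL^{1/3}}{4}+\tfrac{L}{12}\geq \tfrac{S^{3/2}}{4}+\tfrac{S^{3/2}}{12}=\tfrac{S^{3/2}}{3},
$$
contradicting Lemma \ref{lm3.7}. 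The statement for every $q\in[3,6)$ then follows from the result for the chosen $r$ via interpolation, using the uniform $L^6$-bound on $\{u_n\}$ inherited from $D^{1,2}$-boundedness. The only genuine obstacle is that the whole argument hinges on the sharp threshold $c<S^{3/2}/3$ from Lemma \ref{lm3.7}, which is precisely where the hypothesis on $\mu$ enters when $p\in(3,4]$; everything else is routine manipulation of the PS identities.
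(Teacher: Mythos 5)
Your proposal is correct and follows essentially the same route as the paper: argue by contradiction, invoke the Lions-type Lemma \ref{lm2.4} to get $L^\alpha$-vanishing, pass to the limit in the two (PS) identities, and combine the Sobolev inequality with the threshold $c<\tfrac13 S^{3/2}$ from Lemma \ref{lm3.7} to reach a contradiction. The only cosmetic difference is that you eliminate the nonlocal term to write $c=\ell/4+L/12$ while the paper keeps it and uses $c\ge \ell/3$; the two computations are equivalent.
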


\begin{proof}
Suppose by contradiction that for all $R>0$, we have
$$
\lim\limits_{n\rightarrow\infty}\sup\limits_{y\in\R^3}\int_{B_R(y)}|u_n|^qdx=0.
$$
By Lemma \ref{lm2.4}, we have that
\begin{equation*}
u_n\rightarrow0\quad\text{in}\quad L^\alpha(\R^3),\ \alpha\in(3,6).
\end{equation*}
Since $\big\langle J'(u_n),u_n\big\rangle=o_n(1)$, it follows   that
\begin{equation}\label{eqS3.11}
\ds   \int_{\R^3}|\nabla u_n|^2dx+q^2\int_{\R^3}\phi_{u_n} u_n^2dx-\int_{\R^3}|u_n|^6dx=o_n(1).
\end{equation}
By $J(u_n)\rightarrow c$, one has
\begin{equation}\label{eqS3.12}
\ds   \frac12\int_{\R^3}|\nabla u_n|^2dx+\frac{q^2}4\int_{\R^3}\phi_{u_n} u_n^2dx-\frac16\int_{\R^3}|u_n|^6dx=c+o_n(1).
\end{equation}
Recalling that $\{u_n\}\subset E$ is bounded, up to a subsequence, we may assume that
$$
\int_{\R^3}|\nabla u_n|^2dx\rightarrow l_1\geq0
$$
and
$$
\int_{\R^3}|u_n|^6dx\rightarrow l_2\geq0.
$$
It is easy to check that $l_1>0$. Otherwise, $l_2=0$ and $M(u_n)\rightarrow0$ as $n\rightarrow\infty$, which contradicts  $c>0$.
Combing this with \eqref{eqS3.11}, we also conclude that $l_2>0$.
From the definition of $S$, we have that
$$
\int_{\R^3}|\nabla u_n|^2dx\geq S\left(\int_{\R^3}|u_n|^6dx\right)^{\frac{1}{3}}.
$$
Taking the limit as $n\rightarrow\infty$, we obtain
\begin{equation}\label{eqS3.121}
\ds   l_1\ge Sl_2^{\frac13}\ge Sl_1^{\frac13},
\end{equation}
it follows that $l_1\ge S^{\frac32}$. On the other hand, from \eqref{eqS3.11}, \eqref{eqS3.12} and Lemma \ref{lm3.7}, we get
\begin{equation*}
\begin{split}
\ds \frac13 S^{\frac32}>c&=J(u_n)+o_n(1)\\[1mm]
        &=J(u_n)-\frac16\big\langle J'(u_n),u_n\big\rangle+o_n(1)\\[1mm]
    &=\frac13\int_{\R^3}|\nabla u_n|^2dx+\frac{q^2}{12}\int_{\R^3}\phi_{u_n} u_n^2dx+o_n(1)\\[1mm]
    &\ge\frac13 l_1,
\end{split}
\end{equation*}
which is a contradiction with $l_1\ge S^{\frac32}$.
\end{proof}

Now we are ready to prove Theorem \ref{thm1.1}.

\begin{proof}[\bf Proof of Theorem \ref{thm1.1}]
From Proposition \ref{pro3.4} and Lemma \ref{lm3.5}, we deduce a bounded $\text{(PS)}_{c}$ sequence $\{u_n\}$ for $J$ with
$G(u_n)\rightarrow0$.
 Denote $\tilde{u}_n(x)=u_n(x+y_n)$, where $\{y_n\}$ is the sequence given in Lemma \ref{lm3.8}.
It is easy to check that $\{\tilde{u}_n\}$ is still a bounded $\text{(PS)}_{c}$ sequence for $J$ with
$G(\tilde{u}_n)\rightarrow0.$
Up to a subsequence, we may assume that there is a $\tilde{u}\in E$ such that
\begin{equation}\label{eqS3.13}
\renewcommand{\arraystretch}{1.25}
\begin{array}{ll}
\ds \left \{ \begin{array}{ll}
\ds\tilde{u}_n\rightharpoonup\tilde{u}\quad\text{in}\quad E,\\
\tilde{u}_n\rightarrow\tilde{u}\quad\text{in}\quad L_{loc}^r(\R^3),\ r\in[1,6),\\
\tilde{u}_n\rightarrow\tilde{u}\quad\text{a.e. in}\quad \R^3.\\
\end{array}
\right .
\end{array}
\end{equation}

\noindent
It follows from Lemma \ref{lm3.8} that there exist  $R,\ \beta>0$ such that for $q\in[3,6)$,
$$
\int_{B_R(0)} |\tilde{u}_n|^qdx\geq\beta>0,
$$
which implies that $\tilde{u}\neq0$ due to \eqref{eqS3.13}. Moreover, since $J'(\tilde{u}_n)\rightarrow0$, we conclude that
\begin{equation}\label{eqS3.14}
\int_{\R^3}\nabla \tilde{u}_n\nabla\varphi dx+q^2\int_{\R^3}\phi_{\tilde{u}_n}\tilde{u}_n\varphi dx-\mu\int_{\R^3}|\tilde{u}_n|^{p-2}\tilde{u}_n\varphi dx
-\int_{\R^3}|\tilde{u}_n|^{4}\tilde{u}_n\varphi dx\rightarrow0
\end{equation}
for all $\varphi\in C_0^\infty(\R^3)$.
By virtue of the definition of weak convergence, we know that
$$
\int_{\R^3}\nabla \tilde{u}_n\nabla\varphi dx\rightarrow\int_{\R^3}\nabla \tilde{u}\nabla\varphi dx,
$$
$$
\int_{\R^3}|\tilde{u}_n|^{p-2}\tilde{u}_n\varphi dx\rightarrow\int_{\R^3}|\tilde{u}|^{p-2}\tilde{u}\varphi dx
$$
and
$$
\int_{\R^3}|\tilde{u}_n|^{4}\tilde{u}_n\varphi dx\rightarrow\int_{\R^3}|\tilde{u}|^{4}\tilde{u}\varphi dx.
$$
Furthermore, by Lemma \ref{lm2.3}, we also have that
\begin{equation*}
\begin{split}
\ds &\quad\left|\int_{\R^3}\phi_{\tilde{u}_n}\tilde{u}_n\varphi dx-\int_{\R^3}\phi_{\tilde{u}}\tilde{u}\varphi dx\right|\\[1mm]
        &\le \int_{\R^3}|\phi_{\tilde{u}_n}-\phi_{\tilde{u}}||\tilde{u}_n||\varphi|dx+\int_{\R^3}|\phi_{\tilde{u}}||\tilde{u}_n-\tilde{u}||\varphi|dx\\[1mm]
    &\le\left(\int_{supp(\varphi)}|\phi_{\tilde{u}_n}-\phi_{\tilde{u}}|^2dx\right)^{\frac12}\|\tilde{u}_n\|_6\|\varphi\|_3+\|\phi_{\tilde{u}}\|_6
    \left(\int_{supp(\varphi)}|\tilde{u}_n-\tilde{u}|^2dx\right)^{\frac12}\|\varphi\|_3\\[1mm]
    &\rightarrow 0\quad\text{as}\quad n\rightarrow\infty.
\end{split}
\end{equation*}
Using this and \eqref{eqS3.14}, we have
\begin{equation}\label{eqS3.15}
\int_{\R^3}\nabla \tilde{u}\nabla\varphi dx+q^2\int_{\R^3}\phi_{\tilde{u}}\tilde{u}\varphi dx-\mu\int_{\R^3}|\tilde{u}|^{p-2}\tilde{u}\varphi dx
-\int_{\R^3}|\tilde{u}|^{4}\tilde{u}\varphi dx=0
\end{equation}
for all $\varphi\in C_0^\infty(\R^3)$. By Lemma \ref{lm2.1}, \eqref{eqS3.15} holds also for all $\varphi\in E$ and so $\tilde{u}$ is a nontrivial critical
point of $J$. By Remark \ref{re3.1}, $\tilde{u}\in\mathcal{M}$. Then it follows from Lemma \ref{lm3.6} and Fatou's lemma that
\begin{align*}
\ds  c\leq J(\tilde{u})&=J(\tilde{u})-\frac13G(\tilde{u})\\[1mm]
                 &=\frac{q^2}{12a}\int_{\R^3} \int_{\R^3} e^{-\frac{|x-y|}{a}}\tilde{u}^2(x)\tilde{u}^2(y)dxdy
                 +\frac{\mu(2p-6)}{3p}\int_{\R^3}|\tilde{u}|^pdx+\frac{1}{3}\int_{\R^3}|\tilde{u}|^6dx\\[1mm]
                 &\leq\liminf\limits_{n\rightarrow\infty}\left[\frac{q^2}{12a}\int_{\R^3} \int_{\R^3} e^{-\frac{|x-y|}{a}}\tilde{u}_n^2(x)\tilde{u}_n^2(y)dxdy
                 +\frac{\mu(2p-6)}{3p}\int_{\R^3}|\tilde{u}_n|^pdx+\frac{1}{3}\int_{\R^3}|\tilde{u}_n|^6dx\right]\\[1mm]
                 &=\liminf\limits_{n\rightarrow\infty}\left[J(\tilde{u}_n)-\frac13G(\tilde{u}_n)\right]=c.
\end{align*}
Hence, $J(\tilde{u})=c$.

Finally, we end this proof by showing the existence of  a ground state solution for problem \eqref{eqS1.7}.
Set
$$
m=\inf\big\{J(u)\ \big|\ u\in E\setminus\{0\},\ J'(u)=0\big\}.
$$
We first claim that $m>0$. Let $u$ be such that $J'(u)=0$ and $J(u)=m$, we derive that
\begin{equation}\label{eqS3.16}
\begin{split}
\ds m&=J(u)-\frac{1}{2p-3}G(u)\\[1mm]
        &=\frac{p-3}{2p-3} \int_{\R^3}|\nabla u|^2dx+\frac{q^2(p-3)}{2(2p-3)}\int_{\R^3}\phi_{u} u^2dx\\[1mm]
        &\quad+\frac{q^2}{4a(2p-3)}\int_{\R^3} \int_{\R^3} e^{-\frac{|x-y|}{a}}u^2(x)u^2(y)dxdy
                 +\frac{6-p}{3(2p-3)}\int_{\R^3}|u|^6dx\\[1mm]
                 &\ge\frac{p-3}{2p-3}\min\left\{1,\frac{q^2}{2}\right\}M(u).
\end{split}
\end{equation}
Applying the inequality
$$
1-e^{-t}-te^{-t}\ge0,\quad\forall t\ge0,
$$
we get from \eqref{eqS3.2} that
\begin{align*}
\ds \frac{\mu(2p-3)}{p}  \int_{\R^3}|u|^pdx+\frac{3}{2}\int_{\R^3}|u|^{6}dx&=
\frac{3}{2}\int_{\R^3}|\nabla u|^2dx+\frac{q^2}{2}\int_{\R^3}\phi_uu^2dx\\[1mm]
&\quad+\frac{3q^2}{4}\int_{\R^3} \int_{\R^3} \frac{1-e^{-\frac{|x-y|}{a}}-|x-y|e^{-\frac{|x-y|}{a}}}{3|x-y|}u^2(x)u^2(y)dxdy\\[1mm]
  &\ge\min\left\{\frac32,\frac{q^2}{2}\right\}M(u).
\end{align*}
Combining this with Lemma \ref{lm2.6}, we have
$$
M(u)\le C\left(\|u\|_p^p+\|u\|_6^6\right)\le C\left[M(u)^{\frac{2p-3}{3}}+M(u)^3\right],
$$
that is $M(u)\geq C>0$. Hence, the claim follows by \eqref{eqS3.16}.

Let $\{u_n\}\subset E$ be a  sequence of nontrivial  critical points of $J$ satisfying $J(u_n)\rightarrow m\le J(\tilde{u})=c
<\frac13S^{\frac32}$.Using the earlier arguments, we can deduce that
$\{u_n\}$ is bounded in $E$. Arguing as before,
there exists a nontrivial  $u\in E$ such that $J(u)=m$ with $J'(u)=0$, that is,
$u$ is a ground state solution of problem \eqref{eqS1.7}.
\end{proof}

\begin{remarkn}\label{re3.9}
The solutions we obtain are easily seen to be positive. Indeed, if we repeat all the arguments above for the functional
$$
J^+(u)=\frac12\int_{\R^3}|\nabla u|^2dx+\frac{q^2}{4}\int_{\R^3}\phi_u u^2dx-\frac{\mu}{p}\int_{\R^3}(u^+)^pdx-\frac{1}{6}\int_{\R^3}(u^+)^6dx,
$$
where $u^+=\max\{u,0\}$. By the maximum principle, we can conclude $u>0$.
\end{remarkn}

\begin{remarkn}\label{re3.10}
We can also restrict ourselves to the subspace of radial functions $E_r\subset E$ from the beginning. In this case the proof of the convergence of
bounded (PS) sequence is easier by compactness of the embedding $E_r\hookrightarrow L^p(\R^3)$ (see Lemma \ref{lm2.2}).
By using the similar arguments as the proof of Theorem \ref{thm1.1}, we can also conclude that problem  \eqref{eqS1.7} has a positive ground state solution in $E_r$.
\end{remarkn}

\section{\label{sec4}Multiplicity for $p\in(4,6)$}

To establish the existence of multiple solutions for  \eqref{eqS1.7} in the case $p\in(4,6)$, we will focus on finding multiple critical points of $J$.
For this purpose, we will make use of an abstract critical point theorem recently developed by Perera \cite{Pe}.

Let $X$ be Banach space.  For a
symmetric subset $A$ of $X\setminus\{0\}$, we denote by $i(A)$ the cohomological index of $A$,
which was introduced by Fadell and Rabinowitz \cite{FR}. If $A$ is homeomorphic to
the unit sphere $S^{m-1}$ in $\R^m$, then $i(A)=m$.

\begin{proposition}\label{pro4.1}(\cite{Pe})
Let $X$ be a Banach space, $\Phi:X\rightarrow\R$ be an even $C^1$-functional satisfying $\textrm{(PS)}_c$ condition
for $c\in(0,c^*)$, where $c^*$ is some positive constant. If $0$ is a strict local minimizer of $\Phi$ and there are $R>0$ and a compact symmetric set $A\subset\partial\mathcal{B}_R$, where $\mathcal{B}_R=\{u\in X\ \big|\ \|u\|<R\}$,
such that  $i(A)=m$,
\begin{equation}\label{eqS4.1}
\max\limits_{u\in A}\Phi(u)\le0,\quad\max\limits_{u\in B}\Phi(u)<c^*,
\end{equation}
where $B=\{tu\ \big|\ t\in[0,1],\ u\in A\}$, then $\Phi$ has $m$ pairs of nonzero critical points
with positive critical values.
\end{proposition}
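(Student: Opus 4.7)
The plan is a Benci-type pseudo-index argument based on the Fadell--Rabinowitz cohomological index $i$: produce $m$ symmetric minimax levels $0<c_1\le\cdots\le c_m<c^*$, argue each is a critical value of $\Phi$ via an even deformation lemma, and handle possible coincidences using the continuity property of $i$. After translating so that $\Phi(0)=0$, the strict local minimum hypothesis provides $\rho\in(0,R)$ and $\delta>0$ with $\Phi\ge\delta$ on $\partial\mathcal{B}_\rho$. Let
$$
\mathcal{H}=\bigl\{h\in C(X,X)\ \big|\ h\text{ is an odd homeomorphism and }\Phi(h(u))\le\max\{\Phi(u),0\}\ \forall u\in X\bigr\},
$$
and for $k=1,\dots,m$ define
$$
c_k=\inf\Bigl\{\sup_{u\in h(B)}\Phi(u)\ \Big|\ h\in\mathcal{H},\ i\bigl(h(B)\cap\partial\mathcal{B}_\rho\bigr)\ge k\Bigr\}.
$$

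The central step, and the main obstacle, is the linking estimate $i\bigl(h(B)\cap\partial\mathcal{B}_\rho\bigr)\ge m$ for every $h\in\mathcal{H}$. The geometric content is that $B$ is a symmetric cone with apex $0$ and symmetric cap $A\subset\partial\mathcal{B}_R$ of index $m$; by \eqref{eqS4.1} the cap $A$ sits inside the sub-level set $\{\Phi\le 0\}$, and the small sphere $\partial\mathcal{B}_\rho$ separates $0$ from $A$. The constraint $\Phi\circ h\le\max\{\Phi,0\}$ forces $h(A)\subset\{\Phi\le 0\}$, so $h(A)\cap\partial\mathcal{B}_\rho=\emptyset$ since $\Phi\ge\delta>0$ there; consequently, following each ray $t\mapsto h(tu)$ with $u\in A$ from $h(0)=0$ to $h(u)\in\{\Phi\le 0\}$ produces an odd continuous map from a symmetric neighborhood of $h(B)\cap\partial\mathcal{B}_\rho$ onto $A$, and the monotonicity axiom of $i$ yields $i\bigl(h(B)\cap\partial\mathcal{B}_\rho\bigr)\ge i(A)=m$. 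Taking $h=\mathrm{id}$ and using \eqref{eqS4.1} gives $c_m\le\sup_B\Phi<c^*$, while the pseudo-index condition together with $\Phi\ge\delta$ on $\partial\mathcal{B}_\rho$ yields $c_1\ge\delta>0$. Hence every $c_k$ lies in $[\delta,c^*)$, inside the range where $(\mathrm{PS})_{c_k}$ is assumed.

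A standard $\Z_2$-equivariant deformation lemma for even $C^1$ functionals satisfying $(\mathrm{PS})_{c_k}$ then shows that each $c_k$ is a critical value of $\Phi$. If $c_k=c_{k+1}=\cdots=c_{k+j}$ for some $j\ge 1$, the usual argument via continuity/subadditivity of $i$ forces $i(K_{c_k})\ge j+1$ for the compact symmetric critical set $K_{c_k}$: otherwise a symmetric tubular neighborhood $U$ of $K_{c_k}$ with $i(U)=i(K_{c_k})\le j$ could be excised by an odd deformation and the minimax strictly lowered, contradicting $c_{k+j}=c_k$. Since a finite symmetric set has index $1$, $K_{c_k}$ then contains infinitely many, hence at least $j+1$, pairs of nonzero critical points. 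In all cases $\Phi$ possesses at least $m$ pairs of nonzero critical points with positive critical values. The principal difficulty is the linking step in the second paragraph: the class $\mathcal{H}$ must be large enough for the deformation lemma to act on it yet narrow enough that odd admissible deformations cannot push $B\cap\partial\mathcal{B}_\rho$ off the small sphere, which is precisely why the combined hypotheses $\max_A\Phi\le 0$ and $\max_B\Phi<c^*$ in \eqref{eqS4.1} are both needed.
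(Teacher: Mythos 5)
The paper offers no proof of Proposition~4.1 --- it is quoted from Perera \cite{Pe} --- so there is no in-text argument to measure yours against; I can only assess the proposal on its own terms. Your general strategy (cohomological-index linking of the cone $B$ with a small sphere, a symmetric minimax, an equivariant deformation) is the right family of ideas, but the minimax scheme as you have set it up does not deliver multiplicity. By your own central estimate, $i\bigl(h(B)\cap\partial\mathcal{B}_\rho\bigr)\ge m$ for \emph{every} $h\in\mathcal{H}$, so the side condition ``$i\bigl(h(B)\cap\partial\mathcal{B}_\rho\bigr)\ge k$'' in your definition of $c_k$ is vacuous for all $k\le m$, and the levels collapse: $c_1=\dots=c_m=\inf_{h\in\mathcal{H}}\sup_{h(B)}\Phi$. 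The whole burden therefore falls on the degenerate case, i.e.\ on showing $i(K_c)\ge m$ at this single level, and that is precisely where the scheme breaks. The standard excision argument takes a near-optimal competitor, removes a symmetric neighborhood $U$ of $K_c$ with $i(U)=i(K_c)$, deforms by an odd map $\eta$, and exhibits the result as an admissible competitor whose pseudo-index has dropped by at most $i(U)$. But $\eta\bigl(h(B)\setminus U\bigr)$ is not of the form $h'(B)$ for any $h'\in\mathcal{H}$, so it does not belong to your classes and no contradiction with the definition of any $c_k$ is obtained; ``the minimax [is] strictly lowered'' has no meaning for a set outside the admissible family. You need classes stable under excision: either Benci's pseudo-index $i^*(M)=\min_{h\in\mathcal{H}}i\bigl(h(M)\cap\partial\mathcal{B}_\rho\bigr)$ over \emph{all} compact symmetric sets $M$ (with $B$ serving only to show the $k$-th class is nonempty and $c_m\le\max_B\Phi<c^*$), or Rabinowitz-type classes $\bigl\{\overline{h(B\setminus Y)}\ \big|\ h\in\mathcal{H},\ Y\ \text{open symmetric},\ i(Y)\le m-k\bigr\}$, in either case invoking the monotonicity, subadditivity and continuity of $i$.

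Two further steps need repair. First, your derivation of the linking estimate is not valid as stated: the ``first crossing time'' of the ray $t\mapsto h(tu)$ with $\partial\mathcal{B}_\rho$ is not a continuous function of $u\in A$, so it does not furnish an odd continuous map between $A$ and $h(B)\cap\partial\mathcal{B}_\rho$ to which monotonicity could be applied. The inequality $i\bigl(h(B)\cap\partial\mathcal{B}_\rho\bigr)\ge i(A)$ is the piercing property of the Fadell--Rabinowitz index, a genuinely cohomological fact that must be quoted or proved separately; it is not a formal consequence of monotonicity. Second, ``$0$ is a strict local minimizer'' gives $\Phi>0$ on a punctured ball but, in an infinite-dimensional space, does not by itself yield a uniform bound $\Phi\ge\delta>0$ on some sphere $\partial\mathcal{B}_\rho$; since $(\mathrm{PS})_c$ is only assumed for $c\in(0,c^*)$, the positivity of $c_1$ requires an additional argument (in the application of Section~4 the uniform bound is available from Lemma~3.3(i), but the abstract hypothesis is weaker).
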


From now on we restrict ourselves to the radial subspace $E_r$. Then we apply Proposition \ref{pro4.1} by setting $X=E_r$ and $\Phi=J$.
First we prove a local (PS) condition for the functional $J$.

\begin{lemma}\label{lm4.2}
For $p\in(4,6)$, the functional $J$ defined on $E_r$ satisfies $\textrm{(PS)}_c$ condition if
\begin{equation}\label{eqS4.2}
0<c<\frac13S^{\frac32}.
\end{equation}
\end{lemma}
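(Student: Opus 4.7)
The plan is to follow the standard concentration-compactness scheme, adapted to the functional framework of $E_r$, using the compact embedding $E_r \hookrightarrow L^p(\R^3)$ for $p\in(\tfrac{18}{7},6)$ from Lemma~\ref{lm2.2} together with the convergence properties for $\phi_u$ from Lemma~\ref{lm2.3} and the energy bound $c<\tfrac{1}{3}S^{3/2}$. Let $\{u_n\}\subset E_r$ satisfy $J(u_n)\to c$ and $J'(u_n)\to 0$.

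\textbf{Step 1 (Boundedness).} Since $p\in(4,6)$, the combination $J(u_n)-\tfrac{1}{p}\langle J'(u_n),u_n\rangle$ equals
$\left(\tfrac12-\tfrac1p\right)\|\nabla u_n\|_2^2+\left(\tfrac14-\tfrac1p\right)q^2\!\int\phi_{u_n}u_n^2+\left(\tfrac1p-\tfrac16\right)\|u_n\|_6^6$,
and all three coefficients are strictly positive. This yields $M(u_n)$ and $\|u_n\|_6$ bounded, hence by \eqref{eqS2.2} the sequence $\{u_n\}$ is bounded in $E_r$.

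\textbf{Step 2 (Weak limit is a critical point).} Passing to a subsequence, $u_n\rightharpoonup u$ in $E_r$, $u_n\to u$ a.e., and by Lemma~\ref{lm2.2} strongly in $L^p(\R^3)$ since $p\in(4,6)\subset(\tfrac{18}{7},6)$. Lemma~\ref{lm2.3}(i) gives $\phi_{u_n}\rightharpoonup\phi_u$ in $\mathcal{A}$, and together with the usual weak-convergence arguments applied to $\langle J'(u_n),\varphi\rangle$ for $\varphi\in C_0^\infty(\R^3)$ (the nonlocal term being handled as in the proof of Theorem~\ref{thm1.1}, around~\eqref{eqS3.14}), I obtain $J'(u)=0$. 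In particular $J(u)\ge 0$, which again uses $p>4$ in the identity $J(u)=J(u)-\tfrac1p\langle J'(u),u\rangle\ge 0$.

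\textbf{Step 3 (Brezis--Lieb splitting and level estimate).} Set $v_n=u_n-u$. The Brezis--Lieb lemma gives $\|\nabla u_n\|_2^2=\|\nabla v_n\|_2^2+\|\nabla u\|_2^2+o(1)$ and $\|u_n\|_6^6=\|v_n\|_6^6+\|u\|_6^6+o(1)$. For the Bopp--Podolsky term I will use a Brezis--Lieb type splitting $\int\phi_{u_n}u_n^2=\int\phi_u u^2+\int\phi_{v_n}v_n^2+o(1)$, which follows from Lemma~\ref{lm2.3} (weak convergence of $\phi_{u_n}$ in $\mathcal{A}$) and the compact convergence properties of $v_n$. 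Since $u_n\to u$ in $L^p$, combining $J(u_n)\to c$, $\langle J'(u_n),u_n\rangle\to 0$, $J(u)\ge 0$ and $\langle J'(u),u\rangle=0$ yields
\begin{equation*}
\|\nabla v_n\|_2^2+q^2\!\int\phi_{v_n}v_n^2-\|v_n\|_6^6=o(1),
\qquad
\tfrac12\|\nabla v_n\|_2^2+\tfrac{q^2}{4}\!\int\phi_{v_n}v_n^2-\tfrac16\|v_n\|_6^6=c-J(u)+o(1).
\end{equation*}

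\textbf{Step 4 (Ruling out concentration and concluding).} Let $l:=\lim\|\nabla v_n\|_2^2$ and $l_2:=\lim\|v_n\|_6^6$. The Sobolev inequality gives $l\ge S\,l_2^{1/3}$, and the first identity above gives $l_2\ge l$. Assuming $l>0$ I deduce $l\ge S^{3/2}$, and eliminating $q^2\lim\!\int\phi_{v_n}v_n^2=l_2-l\ge 0$ from the two identities yields
$c-J(u)=\tfrac{l}{4}+\tfrac{l_2}{12}\ge\tfrac{l}{3}\ge\tfrac{1}{3}S^{3/2}$,
contradicting $c<\tfrac{1}{3}S^{3/2}$ since $J(u)\ge 0$. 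Hence $l=0$, which forces $\|v_n\|_6\to 0$ via Sobolev and then $\int\phi_{v_n}v_n^2\to 0$ via the first identity. Therefore $u_n\to u$ in $D^{1,2}(\R^3)$ and $\phi_{u_n}\to\phi_u$ in $\mathcal{A}$ (using $\|\phi_u\|_\mathcal{A}^2=4\pi\int\phi_u u^2$ from \eqref{eqS2.1}), so Lemma~\ref{lm2.3}(ii) gives $u_n\to u$ in $E$, establishing the $(\mathrm{PS})_c$ condition. The main delicate point is the Brezis--Lieb type splitting for the nonlocal term $\int\phi_{u_n}u_n^2$; I expect to verify it by writing $\phi_{u_n}u_n^2=\phi_u u^2+\phi_{v_n}v_n^2+(\text{cross terms})$ and showing the cross terms vanish using Lemma~\ref{lm2.3}(i), the a.e. convergence, and the uniform $L^6$/$E$-bounds.
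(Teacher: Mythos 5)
Your overall architecture (boundedness via $J-\tfrac1p\langle J',\cdot\rangle$, identification of the weak limit as a critical point, Brezis--Lieb splitting of the gradient and $L^6$ terms, the dichotomy $l\ge S^{3/2}$ versus $l=0$, and the level estimate $c-J(u)=\tfrac{l}{4}+\tfrac{l_2}{12}\ge\tfrac{l}{3}$) coincides with the paper's proof. The genuine gap is the full Brezis--Lieb \emph{equality} for the nonlocal term, $\int_{\R^3}\phi_{u_n}u_n^2\,dx=\int_{\R^3}\phi_u u^2\,dx+\int_{\R^3}\phi_{v_n}v_n^2\,dx+o_n(1)$, which you flag as ``the main delicate point'' but only sketch. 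In the zero mass framework this is not routine: the cross terms such as $\int_{\R^3}\int_{\R^3}\mathcal{K}(x-y)v_n^2(x)\,v_n(y)u(y)\,dx\,dy$ cannot be handled by Hardy--Littlewood--Sobolev in the usual way, because $E$ (and even $E_r$) embeds into $L^q(\R^3)$ only for $q>\tfrac{18}{7}$, so $u_n^2$ is not uniformly bounded in $L^{6/5}(\R^3)$ and $v_n$ is not controlled in $L^{12/5}(\R^3)$; neither ``a.e.\ convergence plus uniform $L^6$/$E$-bounds'' nor Lemma~\ref{lm2.3}(i) yields the vanishing of these cross terms. Indeed, what the literature (and the paper, via \eqref{eqS4.9}, quoting Proposition~4.1 of \cite{MMV}) actually provides is only the \emph{one-sided} inequality $\int_{\R^3}\phi_{u_n}u_n^2\,dx-\int_{\R^3}\phi_u u^2\,dx\ge\int_{\R^3}\phi_{v_n}v_n^2\,dx+o_n(1)$, so your asserted equality is strictly stronger than what is available.

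The good news is that the equality is not needed, and the paper's proof shows how to avoid it. Keep the nonlocal term as the difference $D_n:=\int_{\R^3}\phi_{u_n}u_n^2\,dx-\int_{\R^3}\phi_u u^2\,dx$ throughout: the two identities \eqref{eqS4.6} and \eqref{eqS4.7} then hold with $D_n$ in place of $\int\phi_{v_n}v_n^2$, and the combination \eqref{eqS4.6}$-\tfrac14\cdot$\eqref{eqS4.7} cancels $D_n$ entirely, giving \eqref{eqS4.8} and hence the contradiction with $c<\tfrac13S^{3/2}$ exactly as in your Step~4 (the sign information $l\le l_2$ follows from Fatou's lemma applied to $D_n$, not from positivity of $\int\phi_{v_n}v_n^2$). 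The one-sided inequality \eqref{eqS4.9} is then invoked only once, at the very end, to convert \eqref{eqS4.7} into $\int_{\R^3}|\nabla v_n|^2dx+q^2\int_{\R^3}\phi_{v_n}v_n^2dx\le\int_{\R^3}|v_n|^6dx+o_n(1)\to0$, which gives $v_n\to0$ directly in the norm of $E_r$ (no separate argument for $\phi_{u_n}\to\phi_u$ in $\mathcal{A}$ is required, since $\|v_n\|$ is built from exactly these two quantities). You should either adopt this route or supply a genuine proof of the splitting you assert.
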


\begin{proof}
Let $\{u_n\}\subset E_r$ be a $\textrm{(PS)}_c$ sequence, i.e.,
$$
J(u_n)=\frac12\int_{\R^3}|\nabla u_n|^2dx+\frac{q^2}{4}\int_{\R^3}\phi_{u_n} u_n^2dx-\frac{\mu}{p}\int_{\R^3}|u_n|^pdx-\frac{1}{6}\int_{\R^3}|u_n|^6dx\rightarrow c
$$
and
\begin{equation}\label{eqS4.3}
\begin{split}
\big\langle J'(u_n),\varphi\big\rangle&=\int_{\R^3}\nabla u_n\nabla\varphi dx+q^2\int_{\R^3}\phi_{u_n}u_n\varphi dx-\mu\int_{\R^3}|u_n|^{p-2}u_n\varphi dx
-\int_{\R^3}|u_n|^{4}u_n\varphi dx\\[1mm]
&=o_n(1)\|\varphi\|
\end{split}
\end{equation}
for all $\varphi\in E_r$. Since  $p\in(4,6)$, we have
\begin{equation*}
\begin{split}
\ds  c+1+o_n(1)\|u_n\|&\ge J(u_n)-\frac{1}{p}\big\langle J'(u_n),u_n\big\rangle\\[1mm]
        &=\left(\frac12-\frac1p\right) \int_{\R^3}|\nabla u_n|^2dx+\left(\frac14-\frac1p\right)q^2\int_{\R^3}\phi_{u_n} u_n^2dx+\left(\frac1p-\frac16\right)\int_{\R^3}|u_n|^6dx\\[1mm]
        &\ge\left(\frac12-\frac1p\right) \int_{\R^3}|\nabla u_n|^2dx+\left(\frac14-\frac1p\right)q^2\int_{\R^3}\phi_{u_n} u_n^2dx,
\end{split}
\end{equation*}
then it follows that $\{u_n\}$ is bounded in $E_r$.
Up to a subsequence, by Lemma \ref{lm2.2} we may assume that there is a $u\in E_r$ such that
\begin{equation}\label{eqS4.4}
\renewcommand{\arraystretch}{1.25}
\begin{array}{ll}
\ds \left \{ \begin{array}{ll}
\ds u_n\rightharpoonup u\quad\text{in}\quad E_r,\\
u_n\rightarrow u\quad\text{in}\quad L^q(\R^3),\ q\in(\frac{18}{7},6),\\
u_n\rightarrow u\quad\text{a.e. in}\quad \R^3.\\
\end{array}
\right .
\end{array}
\end{equation}
Similar to  \eqref{eqS3.14} and  \eqref{eqS3.15}, we conclude $u$ is a critical point  of $J$, that is $J'(u)=0$. From this, we infer that
\begin{equation}\label{eqS4.5}
\begin{split}
J(u)&= J(u)-\frac{1}{p}\big\langle J'(u),u\big\rangle\\[1mm]
        &=\left(\frac12-\frac1p\right) \int_{\R^3}|\nabla u|^2dx+\left(\frac14-\frac1p\right)q^2\int_{\R^3}\phi_{u} u^2dx+\left(\frac1p-\frac16\right)\int_{\R^3}|u|^6dx\\[1mm]
        &\ge0.
\end{split}
\end{equation}

Set $v_n=u_n-u$. It follows from Brezis-Lieb lemma that
$$
\int_{\R^3}|\nabla u_n|^2dx=\int_{\R^3}|\nabla u|^2dx+\int_{\R^3}|\nabla v_n|^2dx+o_n(1)
$$
and
$$
\int_{\R^3}|u_n|^6dx=\int_{\R^3}|u|^6dx+\int_{\R^3}|v_n|^6dx+o_n(1).
$$
Combining this with \eqref{eqS4.4} and $J(u_n)\rightarrow c$, we have
\begin{equation}\label{eqS4.6}
J(u)+\frac12\int_{\R^3}|\nabla v_n|^2dx+\frac{q^2}{4}\left[\int_{\R^3}\phi_{u_n} u_n^2dx-\int_{\R^3}\phi_{u} u^2dx\right]-\frac{1}{6}\int_{\R^3}|v_n|^6dx= c+o_n(1).
\end{equation}
Taking $\varphi=u_n$ in \eqref{eqS4.3}, from $J'(u)=0$ we get
\begin{equation}\label{eqS4.7}
\int_{\R^3}|\nabla v_n|^2dx+q^2\left[\int_{\R^3}\phi_{u_n} u_n^2dx-\int_{\R^3}\phi_{u} u^2dx\right]-\int_{\R^3}|v_n|^6dx= o_n(1).
\end{equation}
Then, \eqref{eqS4.6} and \eqref{eqS4.7} yield
\begin{equation}\label{eqS4.8}
J(u)+\frac14\int_{\R^3}|\nabla v_n|^2dx+\frac{1}{12}\int_{\R^3}|v_n|^6dx= c+o_n(1).
\end{equation}
Up to a subsequence, we may assume that
$$
\int_{\R^3}|\nabla v_n|^2dx\rightarrow l_1\geq0
$$
and
$$
\int_{\R^3}|v_n|^6dx\rightarrow l_2\geq0.
$$
By Fatou's lemma, together with \eqref{eqS4.7}, indicates that $l_1\le l_2$. Similar to \eqref{eqS3.121} we have
\begin{equation*}
\ds   l_1\ge Sl_2^{\frac13}\ge Sl_1^{\frac13}.
\end{equation*}
If $l_1>0$, we get  $l_1\ge S^{\frac32}$. On the other hand, thanks to  \eqref{eqS4.5} and \eqref{eqS4.8}, we get a contradiction with \eqref{eqS4.2}:
$$
c\ge \frac14l_1+\frac1{12}l_2\ge\frac13l_1\ge\frac13S^{\frac32}.
$$
So, $l_1=0$, which means $v_n\rightarrow0$ in $D^{1,2}(\R^3)$.

Finally, we will show that $v_n\rightarrow0$ in $E_r$. Arguing as in Proposition 4.1 in \cite{MMV}, we obtain similar inequality as follows:
\begin{equation}\label{eqS4.9}
\int_{\R^3}\phi_{u_n} u_n^2dx-\int_{\R^3}\phi_{u} u^2dx\ge\int_{\R^3}\phi_{v_n} v_n^2dx+o_n(1).
\end{equation}
Combining this with \eqref{eqS4.7} gives
\begin{equation*}
\begin{split}
\ds \int_{\R^3}|\nabla v_n|^2dx+q^2\int_{\R^3}\phi_{v_n} v_n^2dx&\le\int_{\R^3}|v_n|^6dx+o_n(1)\\[1mm]
        &\le S^{-3}\left(\int_{\R^3}|\nabla v_n|^2dx\right)^3+o_n(1)\rightarrow0.
\end{split}
\end{equation*}
Therefore, $v_n\rightarrow0$ in $E_r$, that is $u_n\rightarrow u$ in $E_r$.
\end{proof}

\begin{proof}[\bf Proof of Theorem \ref{thm1.2}]
Arguing as in the proof of Lemma \ref{lm3.3} (i), it is clear that $u=0$ is a strict local minimizer of $J$. By Lemma \ref{lm4.2}, $J$
satisfies $\textrm{(PS)}_c$ condition
for $c\in(0,c^*)$ with $c^*=\frac13S^{\frac32}$. Then it suffices to find the subsets $A$ and $B$ satisfying the geometric assumption
\eqref{eqS4.1} in applying Proposition \ref{pro4.1}. Borrowing the argument from \cite{LP}, for fixed $m\in\N$, let $Z_m$ be  an $m$-dimensional subspace of $E_r$.
For any $u\in Z_m$, we have
\begin{equation}\label{eqS4.10}
\begin{split}
J(u)&=\frac 12\int_{\R^3}|\nabla u|^2dx+\frac{q^2}{4}\int_{\R^3}\phi_u u^2dx-\frac{\mu}{p}\int_{\R^3}|u|^pdx-\frac{1}{6}\int_{\R^3}|u|^6dx\\[1mm]
        &\le \frac12\|u\|^2+\frac{q^2}{4}\|u\|^4-\mu C_1\|u\|^p-C_2\|u\|^6,
\end{split}
\end{equation}
where we used the fact that all norms on $Z_m$ are equivalent. Take $R>0$ large enough such that
\begin{equation}\label{eqS4.11}
f(R):=\frac12R^2+\frac{q^2}{4}R^4-C_2R^6<0.
\end{equation}
Let $A=Z_m\cap\partial\mathcal{B}_R$, then $i(A)=m$. If $\mu>0$, then for any $u\in A$, \eqref{eqS4.10} gives that $J(u)\le f(R)<0$. Thus
$$
\max\limits_{u\in A}J(u)\le0.
$$
In view of \eqref{eqS4.11}, we may take $\delta\in(0,R)$ small enough such that
\begin{equation}\label{eqS4.12}
f(s)<c^*\quad\text{for}\ s\in[0,\delta].
\end{equation}
Set
$$
\mu_m=1+\max\limits_{s\in[\delta,R]}\left|\frac{f(s)-c^*}{C_1s^p}\right|.
$$
If $\mu\ge\mu_m$ we have
\begin{equation}\label{eqS4.13}
f(s)-\mu C_1s^p<c^*\quad\text{for}\ s\in[\delta,R].
\end{equation}
Then we claim that for $B=\{tu\ \big|\ t\in[0,1],\ u\in A\}$, there holds
$$
\max\limits_{u\in B}J(u)<c^*.
$$
Now we consider two cases.

\noindent
(i)\ If $t\in[0,\frac{\delta}{R}]$, then $\|tu\|\in[0,\delta]$, and \eqref{eqS4.12} implies
$$
J(tu)\le f(\|tu\|)<c^*.
$$
(ii)\ If $t\in[\frac{\delta}{R},1]$, then $\|tu\|\in[\delta,R]$, and \eqref{eqS4.13} implies
$$
J(tu)\le f(\|tu\|)-\mu C_1\|tu\|^p<c^*.
$$
Hence, the claim follows. By Proposition \ref{pro4.1}, $J$ has  $m$ pairs of nontrivial critical points
with positive critical values, which are  $m$ pairs of nontrivial solutions for problem \eqref{eqS1.7}.
\end{proof}

\medskip
\noindent
\textbf{Author contributions} Wentao Huang wrote the main manuscript text and Li Wang reviewed and revised
 the manuscript.

\medskip
\noindent
\textbf{Data Availability Statement} The manuscript has no associated data.

\medskip
\noindent
\textbf{Declarations}

\noindent
\textbf{Conflict of interest} The authors declare that there is no conflict of interest.

\medskip
\noindent
\textbf{Ethical Approval} The authors declare that there are no ethics issues to be approved or disclosed.


\begin{thebibliography}{99}
\bibitem{BL} H. Berestycki, P. L. Lions, Nonlinear scalar field equations. I. Existence of a ground state,
              {\it Arch. Rational Mech. Anal.,} {\bf 82} (1983), 313--345.

\bibitem{Bo} F. Bopp, Eine Lineare Theorie des Elektrons, {\it Ann. Phys.,} {\bf430} (1940), 345--384.

\bibitem{B1} M. Born, Modified field equations with a finite radius of the electron, \textit{Nature,} \textbf{132} (1933), 282.

\bibitem{B2} M. Born, On the quantum theory of the electromagnetic field, \textit{Proc. R. Soc. Lond. Ser. A,} \textbf{143} (1934), 410--437.

\bibitem{B3} M. Born, L. Infeld, Foundations of the new field theory, \textit{Nature,} \textbf{132} (1933), 1004.

\bibitem{B4} M. Born, L. Infeld, Foundations of the new field theory, \textit{Proc. R. Soc. Lond. Ser. A,}  \textbf{144} (1934), 425--451.

\bibitem{Cd}E. Caponio, P. d'Avenia, A. Pomponio,  G. Siciliano, L. Yang, On a nonlinear Schr\"{o}dinger-Bopp-Podolsky system in the zero mass case:
Functional framework and existence, {\it arXiv:2506.09752v1,} (2025).

\bibitem{CT} S. Chen, X. Tang, On the critical Schr\"{o}dinger-Bopp-Podolsky system with general nonlinearities,
 \textit{Nonlinear Anal.,} \textbf{195} (2020), 111734.

\bibitem{DS}H. M. S. Damian, G. Siciliano, Critical Schr\"{o}dinger-Bopp-Podolsky systems: solutions
 in the semiclassical limit, \textit{Calc. Var. Partial Differential Equations,} \textbf{63} (2024), 63:155.

\bibitem{dG}P. d'Avenia, M. G. Ghimenti, Multiple solutions and profile description for a nonlinear Schr\"{o}dinger-Bopp-Podolsky-Proca system on a manifold, \textit{Calc. Var. Partial Differential Equations,} \textbf{61} (2022), 61:223.

\bibitem{d} P. d'Avenia, G. Siciliano, Nonlinear Schr\"{o}dinger euqation in the Bopp-Podolsky electrodynamics: Solutions in the electrostatic case, {\it J. Differential Equations,} {\bf 267} (2019), 1025--1065.

\bibitem{de}G. de Paula Ramos, Concentrated solutions to the Schr\"{o}dinger-Bopp-Podolsky system with a positive potential, \textit{J. Math. Anal. Appl.,} \textbf{535} (2024),  128098.

\bibitem{dS}G. de Paula Ramos, G. Siciliano, Existence and limit behavior of least energy solutions to constrained Schr\"{o}dinger-Bopp-Podolsky systems in $\R^3$,
\textit{Z. Angew. Math. Phys.,} \textbf{74} (2023), 74:56.

\bibitem{FR}E. R. Fadell, P. H. Rabinowitz, Generalized cohomological index theories for
Lie group actions with an application to bifurcation questions for Hamiltonian
systems, \textit{Invent. Math., } \textbf{45} (1978), 139--174.


\bibitem{Fr}F. Frenkel, $4/3$ problem in classical electrodynamics, \textit{Phys. Rev. E,} \textbf{54} (1996), 5859--5862.


\bibitem{H1}E. Hebey, Electro-magneto-static study of the nonlinear Schr\"{o}dinger equation coupled with Bopp-Podolsky electrodynamics in the Proca setting, \textit{Discrete Contin. Dyn. Syst.,} \textbf{39} (2019), 6683--6712.

\bibitem{H2}E. Hebey, Strong convergence of the Bopp-Podolsky-Schr\"{o}dinger-Proca system to the Schr\"{o}dinger-Poisson-Proca system
in the electro-magneto-static case, \textit{Calc. Var. Partial Differential Equations,} \textbf{59} (2020), 59:198.

\bibitem{HIT}J. Hirata, N. Ikoma, K. Tanaka, Nonlinear scalar feld equations in $\R^N$: Mountain pass and symmetric mountain pass
approaches, \textit{Topol. Methods Nonlinear Anal.,} \textbf{35} (2010),  253--276.

\bibitem{HW}J. Huang, S. Wang, Normalized ground states for the mass supercritical Schr\"{o}dinger-Bopp-Podolsky system: Existence,
uniqueness, limit behavior, strong instability, {\it J. Differential Equations,} {\bf 437} (2025), 113282.

\bibitem{IR} I. Ianni, D. Ruiz, Ground and bound states for a static Schr\"{o}dinger-Poisson-Slater problem, \textit{Commun. Contemp. Math.,}
\textbf{14} (2012), 1250003.

\bibitem{J}L. Jeanjean, Existence of solutions with prescribed norm for semilinear elliptic equations, \textit{Nonlinear Anal.,} \textbf{28} (1997),
 1633--1659.

\bibitem{LPT} L. Li, P. Pucci, X. Tang, Ground state solutions for the nonlinear Schr\"{o}dinger-Bopp-Podolsky system
 with critical Sobolev exponent, \textit{Adv. Nonlinear Stud.,} \textbf{20} (2020), 511--538.

\bibitem{LZ}Y. Li, B. Zhang, Critical Schr\"{o}dinger-Bopp-Podolsky system with prescribed mass, \textit{J. Geom. Anal.,} \textbf{7} (2023), 1--27.

\bibitem{LCF} Y. Li, X. Chang, Z. Feng, Normalized solutions for Sobolev critical Schr\"{o}dinger-Bopp-Podolsky systems,
 \textit{Electron. J. Differential Equations,} \textbf{56} (2023), 1--19.

\bibitem{Lions}P. L. Lions, The concentration-compactness principle in the calculus of variations. The locally compact case. II, \textit{Ann.
Inst. H. Poincar\'{e} Anal. Non Lin\'{e}aire,} \textbf{2} (1984), 223--283.

\bibitem{LP}S. Liu, K. Perera, Multiple solutions for $(p,q)$-Laplacian equations in $\R^N$
with critical or subcritical exponents, \textit{Calc. Var. Partial Differential Equations,}
\textbf{63} (2024), 63:199.

\bibitem{LZH}Z. Liu, Z. Zhang, S. Huang, Existence and nonexistence of positive solutions for a static Schr\"{o}dinger-Poisson-Slater
equation, \textit{J. Differential Equations,} \textbf{266} (2019), 5912--5941.

\bibitem{MMV}C. Mercuri, V. Moroz, J. Van Schaftingen, Groundstates and radial solutions to nonlinear Schr\"{o}dinger-Poisson-Slater
equations at the critical frequency, \textit{Calc. Var. Partial Differential Equations,} \textbf{55} (2016), 55:146.

\bibitem{Mi} G. Mie, Grundlagen einer Theorie der Materie, {\it Ann. Phys.,} \textbf{345} (1913), 1--66.

\bibitem{Pe}K. Perera, Abstract multiplicity theorems and applications to critical growth
problems, \textit{J. Anal. Math.,} (in press).

\bibitem{Po} B. Podolsky, A generalized electrodynamics, {\it Phys. Rev.,} {\bf 62} (1942), 68--71.

\bibitem{R}D. Ruiz, On the Schr\"{o}dinger-Poisson-Slater system: behavior of minimizers, radial and nonradial cases, \textit{Arch. Ration.
Mech. Anal.,} \textbf{198} (2010), 349--368.

\bibitem{SS} G. Siciliano, K. Silva, The fibering method approach for a non-linear Schr\"{o}dinger equation coupled with the electromagnetic field, \textit{Publ. Mat.,} \textbf{64} (2020), 373--390.

\bibitem{WCL}L. Wang, H.  Chen, S.  Liu, Existence and multiplicity of sign-changing solutions for a Schr\"{o}dinger-Bopp-Podolsky system, \textit{Topol. Methods Nonlinear Anal.,} \textbf{59} (2022), 913--940.

\bibitem{W} M. Willem, Minimax Theorems, Progr. Nonlinear Differential Equations Appl., vol. 24, Birkh\"{a}user Boston, Inc.,
Boston, MA, 1996.

\bibitem{Z1}Q.  Zhang, Sign-changing solutions for Schr\"{o}dinger-Bopp-Podolsky system with general nonlinearity, \textit{Z. Angew. Math. Phys.,} \textbf{73} (2022), 73:235.

\bibitem{Z2}Z.  Zhang, Sign-changing solutions for a class of Schr\"{o}dinger-Bopp-Podolsky system with concave-convex nonlinearities, \textit{J. Math. Anal. Appl.,} \textbf{530} (2024),  127712.
\end{thebibliography}
\end{document}